\DeclareFontFamily{OT1}{pzc}{}
\DeclareFontShape{OT1}{pzc}{m}{it}{<-> s * [1.10] pzcmi7t}{}
\DeclareMathAlphabet{\mathpzc}{OT1}{pzc}{m}{it}
\definecolor{mygray}{gray}{0.4}
\newcommand{\vxb}{\bar{v}^\times}
\newcommand{\vmb}{\bar{v}^-}
\newcommand{\vpb}{{\bar{v}}^{+}}
\newcommand{\vi}{v^1}
\newcommand{\vii}{v^2}
\newcommand{\vp}{v^{+}}
\newcommand{\Qal}{Q^{al}}
\newcommand{\wal}{w^{al}}
\newcommand{\Qo}{Q^{0}}
\newcommand{\vo}{v^{0}}
\newcommand{\Bco}{B_c^{0}}
\newcommand{\Qalo}{Q^{al,0}}
\newcommand{\algorithmQ}{Algorithm~\textbf{Q}}
\newcommand{\algorithmQAL}{Algorithm~\textbf{QAL}}
\newcommand{\algorithmQo}{Algorithm~\textbf{Q0}}
\newcommand{\algorithmQodir}{Algorithm~\textbf{Q0-dir}}
\newcommand{\algorithmQosym}{Algorithm~\textbf{Q0-sym}}
\newcommand{\algorithmQALo}{Algorithm~\textbf{QAL0}}
\newcommand{\algorithmQALodir}{Algorithm~\textbf{QAL0-dir}}
\newcommand{\algorithmQALosym}{Algorithm~\textbf{QAL0-sym}}
\newtheorem{prop}{Proposition}
\newtheorem{theorem}{Theorem}[section]
\newtheorem{definition}{Definition}[section]
\begin{document}

\fontsize{12pt}{12pt}
\usefont{OT1}{cmr}{m}{n}
\begin{center}
	\textbf{On multi-conditioned conic fitting in Geometric
		algebra for conics}
\end{center}

\begin{center}
	Pavel Lou\v cka and Petr Va\v s\'ik
\end{center}

\noindent
\textbf{Abstract.} We introduce several modifications of conic fitting in Geometric algebra for conics by incorporating additional conditions into the optimisation problem. Each of these extra conditions ensure additional geometric properties of a fitted conic, in particular, centre point position at the origin of coordinate system, axial alignment with coordinate axes, or, eventually, combination of both. All derived algorithms are accompanied by a discussion of the underlying algebra and optimisation issues, together with the implementation in MATLAB. Finally, we present examples on a sample dataset and offer possible use of the algorithms.

\vspace*{12pt} 
\noindent
\textbf{Keywords.} conic fit, geometric algebra, Clifford algebra, centre position, axial alignment

\section{Introduction}

In this paper we provide modifications to the conic fitting algorithm introduced in \cite{GACFIT}, i.e. an algorithm based on conic representation in Geometric Algebra for Conics (GAC), \cite{GAC}.  Particularly, besides the normalisation condition natural for GAC that was used in the original algorithm, we impose additional geometric conditions on a fitted conic, more specifically, the following: the centre point's position at the origin of coordinate system, axial alignment with coordinate axes, and, eventually, the combination of both. 
The only similar modification of the conic fitting algorithms known to the authors may be found in \cite{stred_elipsy_na_primce}, where the conic is fitted in such a way that its centre lies on a prescribed line. In contrast with this approach, we obtain our solution explicitly as a direct result of the optimisation problem, thanks to its geometric formulation in GAC. 

For a detailed survey of standard algorithms we refer to \cite{fit_alg}. Although all classical algorithms use a linear or quadratic constraint, each of them can be linearised in the sense that the best fit may be found using an eigenvector of a matrix operator. Yet this is the only similarity of the classical algorithms to our approach. 

We recall the original GAC fitting algorithm in Section~\ref{sect3}, where we also present its modification which decreases computational demands. Consequently, in Section~\ref{sect4} we introduce additional geometric conditions in terms of GAC and modify the original algorithm accordingly. We also discuss the optimisation issues w.r.t. the geometric algebra representation and include implementation of the modified algorithms in MATLAB code. In the end of the section, we also offer the overview of the described algorithms. In Section \ref{sect5} we provide an analysis and visualisation of the testing examples and finally, we mention a possible use of the algorithms.

\section{Geometric algebra}
By a geometric algebra (GA) we mean a Clifford algebra with a specific embedding of an Euclidean space (of arbitrary dimension) in such a way that the predefined geometric primitives as well as their transformations are viewed as the algebra elements, precisely multivectors. This concept has been introduced by D. Hestenes in \cite{hestenes} and has been used in many mathematical and engineering applications since, see e.g. \cite{roura}. 

Great computational advantage of GA is that the geometric operations such as intersections, tangents, distances etc. are linear functions and therefore their calculation is efficient. To demonstrate this, we refer to \cite{Perwass} for the basics of geometric algebras, especially for conformal representation of a Euclidean space. Indeed, 3-dimensional Euclidean space is represented in Clifford algebra $\mathcal Cl(4,1)$ with the basis vectors  $\{e_1,e_2,e_3,e_4\},$  and the consequent geometric algebra is often denoted as $\mathbb{G}_{4,1}$ with spheres of all types as geometric primitives and Euclidean transformations at hand, see eg. \cite{dorst}. 

We use the algebra for conics, proposed by C. Perwass to generalise the concept of (2-dimensional) conformal geometric algebra $\mathbb{G}_{3,1}$, \cite{Perwass}, in the sequel referred to as CRA (Compass Ruler Algebra), \cite{dh2}. Let us stress that we use the notation of \cite{GACFIT}. In the usual basis $\bar{n},e_1,e_2,n$, an embedding of a plane in $\mathbb{G}_{3,1}$ is given by
$$
(x,y)\mapsto \bar{n} + xe_1+ye_2+\frac12(x^2+y^2)n,
$$
where $e_1,e_2$ form a Euclidean basis and $\bar n$ and $n$ stand for a specific linear combination of additional basis vectors $e_3, e_4$ with $e_3^2=1$ and $e_4^2=-1$, giving them the meaning of the coordinate origin and infinity, respectively, \cite{Perwass}. Hence the objects representable by vectors in $\mathbb{G}_{3,1}$ are linear combinations of $1, x, y, x^2+y^2$, i.e. circles, lines, point pairs and points. If we want to cover also general conics, we need to add two terms: $\frac12(x^2-y^2)$ and $xy$. It turns out that we need two new infinities for that and also their two corresponding counterparts (Witt pairs), \cite{Lounesto}. Thus the resulting dimension of the space generating the appropriate geometric algebra is eight.

Analogously to CGA and to the notation in \cite{Perwass}, we denote the corresponding basis elements as follows
$$\bar{n}_+,\bar{n}_-,\bar{n}_\times,e_1,e_2,n_+,n_-,n_\times.$$
This notation suggests that the basis elements $e_1,e_2$ will play the usual role of the standard basis of the plane while the null vectors $\bar{n}$, $n$ will represent either  the origin or the infinity. Note that there are three orthogonal `origins' $\bar{n}$ and three corresponding orthogonal `infinities' $n$.
In terms of this basis,  a point of the plane ${\bf x} \in\mathbb{R}^2$ defined by ${\bf x}=xe_1+ye_2$ is embedded using the operator $C: \mathbb{R}^2 \to \mathcal{C}\hspace{-1.5pt}\mathpzc{one}\subset \mathbb{R}^{5,3}$, which is defined by
\begin{align} \label{embedding}
C(x,y)=\bar{n}_+ + xe_1+ye_2+\frac12(x^2+y^2)n_+ + \frac12(x^2-y^2)n_- + xyn_\times,
\end{align}
where the image $\mathcal{C}\hspace{-1.5pt}\mathpzc{one}$ of the plane in $\mathbb{R}^{5,3}$ is an analogue of the conformal cone. In fact, it is a two-dimensional real projective variety determined by five homogeneous polynomials of degree one and two.
\begin{definition}
	Geometric Algebra for Conics (GAC) is the Clifford algebra $\mathbb G_{5,3}$ together with the embedding $\mathbb{R}^2\rightarrow\mathbb{R}^{5,3}$.
\end{definition}
Note that, up to the last two terms, the embedding \eqref{embedding} is the embedding of the plane into the two-dimensional conformal geometric algebra $\mathbb{G}_{3,1}$. In particular, it is evident that the scalar product of two embedded points is the same as in $\mathbb{G}_{3,1}$, i.e. for two points ${\bf x},{\bf y}\in\mathbb{R}^2$ we have
\begin{align*} 
C({\bf x})\cdot C({\bf y})=-\frac12 \|{\bf x}-{\bf y}\|^2,
\end{align*}
where the standard Euclidean norm is considered on the right hand side. This demonstrates the linearisation of the distance problems. In particular, each point is represented by a null vector.
Let us recall that the invertible algebra elements are called versors and they form a group, the Clifford group, and that conjugations with versors give the transformations intrinsic to the  algebra. Namely, if the conjugation with a $\mathbb{G}_{5,3}$ versor $R$ preserves the `cone' $\mathcal{C}\hspace{-1.5pt}\mathpzc{one}$, i.e. for each ${\bf x}\in\mathbb{R}^2$ there exists such a point $\bar{{\bf x}}\in\mathbb{R}^2$ that
\begin{equation*}  
	RC({\bf x})\tilde{R} = C( \bar{{\bf x}}),
\end{equation*}
where $\tilde{R}$ is the reverse of $R$, then ${\bf x}\mapsto\bar{{\bf x}}$ induces a transformation $\mathbb{R}^2\to\mathbb{R}^2$ which is intrinsic to GAC. See \cite{GAC} to find that the conformal transformations are intrinsic to GAC.

Let us also recall the outer (wedge) product, inner product and the duality $A^*=AI^{-1}$.
Henceforth we use the usual definitions as in \cite{Perwass}. Note that in GAC the pseudoscalar (the highest grade element) is given by
$
I=\bar{n}_+\bar{n}_-\bar{n}_\times e_1e_2n_+n_-n_\times.
$

Let us also recall that if a conic $C$ is seen as a wedge of five different points (which determines a conic uniquely), we call the appropriate 5-vector $E^*$ an outer product null space representation (OPNS) and its dual $E$, indeed a one vector, the inner product null space (IPNS) representation. The reason is that if a point $P$ lies on a conic $C$ then
$$P\cdot E=0\quad \text{and}\quad P\wedge E^*=0.$$
Consequently, intersections of two geometric primitives are given as the wedge product of their IPNS representations, i.e. 
$$C_1\cap C_2=E_1\wedge E_2$$
for two conics $C_1,C_2$ and their IPNS representations $E_1$ and $E_2$, respectively, see \cite{GAC}.
It is well known that the type of a given unknown conic can be read off its matrix representation, which in our case for a conic given by vector \eqref{IPNS_conic}  reads
\begin{align} \label{conic_matrix} 
Q = 
\renewcommand{\arraystretch}{1.5}
\setlength\arraycolsep{2pt}
\begin{pmatrix}-\tfrac12(\bar{v}^++\bar{v}^-) & -\tfrac12\bar{v}^\times&\phantom{-}\frac12v^1\\
-\tfrac12\bar{v}^\times & -\frac12(\bar{v}^+-\bar{v}^-) &\phantom{-}\frac12v^2\\
\phantom{-}\frac12v^1 & \phantom{-}\frac12v^2 & \phantom{|}-v^+\end{pmatrix}.
\end{align} 
The entries of \eqref{conic_matrix} can be easily computed by means of the inner product:
\begin{align*}  
q_{11}&=Q_I\cdot\tfrac12(n_+-n_-), \\
q_{22}&=Q_I\cdot\tfrac12(n_++n_-),  \\
q_{33}&=Q_I\cdot \bar{n}_+, \\
q_{12}&=q_{21}=Q_I\cdot\tfrac12 n_\times, \\
q_{13}&=q_{31}=Q_I\cdot\tfrac12 e_1, \\
q_{23}&=q_{32}=Q_I\cdot\tfrac12 e_2. 
\end{align*} 
It is also well known how to determine the internal parameters of an unknown conic, its position and the orientation in the plane from the matrix \eqref{conic_matrix}. Hence all this can be determined from the GAC vector $Q_I$ by means of the inner product.

A general vector in the conic space $\mathbb{R}^{5,3}$ in terms of our basis is of the form $$v=\bar{v}^+\bar{n}_++\bar{v}^-\bar{n}_-+\bar{v}^\times\bar{n}_\times+v^1e_1+v^2e_2+v^+n_++v^-n_-+v^\times n_\times$$
and its inner product with an embedded point is then given by
$$
C(x,y)\cdot v=-\frac12(\bar{v}^++\bar{v}^-)x^2-\bar{v}^\times xy-\frac12(\bar{v}^+-\bar{v}^-)y^2+v^1x+v^2y-v^+,
$$
i.e. by a general polynomial of degree two. Thus the objects representable in GAC are exactly conics.
We also see that the two-dimensional subspace generated by infinities $n_-,n_\times$ is orthogonal to all embedded points. Hence a conic is uniquely represented (in  homogeneous sense) by a vector in $\mathbb{R}^{5,3}$ modulo this subspace. This gives the desired dimension six. In other words, the inner representation of a conic in GAC can be defined as a vector 
\begin{align} \label{IPNS_conic} 
Q_I=\bar{v}^+\bar{n}_++\bar{v}^-\bar{n}_-+\bar{v}^\times\bar{n}_\times+v^1e_1+v^2e_2+v^+n_+.
\end{align} 

The classification of conics is well known. The non-degenerate conics are of three types, the ellipse, hyperbola, and parabola. Let us recall just the form of an ellipse in GAC. An ellipse $E$ with the semi-axes $a,b$ centred in $(u,v)\in\mathbb{R}^2$ rotated by angle $\theta$ is in the GAC inner representation given by
\begin{align*}
	E_I&=\bar{n}_+-(\alpha\cos2\theta)\bar{n}_- -(\alpha\sin2\theta)\bar{n}_\times \\ \nonumber
	&\quad + (u+u\alpha\cos2\theta-v\alpha\sin2\theta)e_1
	+(v+v\alpha\cos2\theta-u\alpha\sin2\theta)e_2 \\ \nonumber
	&\quad +\tfrac12\left(u^2+v^2-\beta-(u^2-v^2)\alpha\cos2\theta-2uv\alpha\sin2\theta\right)n_+.
\end{align*}  
An axes-aligned ellipse $E_I$  with semi-axes $a,b$ and centre in the coordinate system origin is a vector of the form
$$E_I = (a^2+b^2)\bar{n}_++(a^2-b^2)\bar{n}_- -a^2b^2 n_+.$$

\noindent
For proofs and further details see \cite{GACFIT}.

Altogether, GAC is constituted as a Clifford algebra $ \mathcal{C}l(5,3) $ with an associated bilinear form given by a matrix
\begin{align} \label{B} 
\renewcommand{\arraystretch}{1.2}
\setlength\arraycolsep{3pt}
B=\begin{pmatrix}
\phantom{-}0&\phantom{-}0&\phantom{-}0&\phantom{-}0&\phantom{-} 0&\phantom{-}0&\phantom{-}0&-1\phantom{|}\\
\phantom{-}0&\phantom{-}0&\phantom{-}0&\phantom{-}0&\phantom{-} 0&\phantom{-}0&-1&\phantom{-}0\phantom{|}\\
\phantom{-}0&\phantom{-}0&\phantom{-}0&\phantom{-}0&\phantom{-} 0&-1&\phantom{-}0&\phantom{-}0\phantom{|}\\
\phantom{-}0&\phantom{-}0&\phantom{-}0&\phantom{-}1&\phantom{-} 0&\phantom{-}0&\phantom{-}0&\phantom{-}0\phantom{|}\\
\phantom{-}0&\phantom{-}0&\phantom{-}0&\phantom{-}0&\phantom{-}1&\phantom{-}0&\phantom{-}0&\phantom{-}0\phantom{|}\\
\phantom{-}0&\phantom{-}0&-1&\phantom{-}0&\phantom{-} 0&\phantom{-}0&\phantom{-}0&\phantom{-}0\phantom{|}\\
\phantom{-}0&-1&\phantom{-}0&\phantom{-}0&\phantom{-} 0&\phantom{-}0&\phantom{-}0&\phantom{-}0\phantom{|}\\
-1&\phantom{-}0&\phantom{-}0&\phantom{-}0&\phantom{-} 0&\phantom{-}0&\phantom{-}0&\phantom{-}0\phantom{|}\\
\end{pmatrix},
\end{align} 
and an embedded GAC point \eqref{embedding} has the form
\begin{align}\label{GACpoint}
P=&
\setlength\arraycolsep{4pt}
\begin{pmatrix}
0 &0& 1& x& y& \frac12(x^2+y^2)& \frac12(x^2-y^2)& xy
\end{pmatrix}^T,
\end{align}
while the IPNS representation of a general conic section reads
\begin{align}\label{GACconic}
Q_I=&
\setlength\arraycolsep{4pt}
\begin{pmatrix}
\bar{v}^\times&\bar{v}^-&\bar{v}^+&v^1&v^2&v^+&0&0
\end{pmatrix}^T.
\end{align}
The advantage of this notation is that the $4\times4$ matrix in the middle of \eqref{B} is the usual matrix for the inner product in the conformal geometric algebra. Consequently, CRA objects in GAC are built of vectors  with vanishing  first two and last two coordinates and thus they are easier to recognise.

\section{Conic fitting in GAC - original algorithm} \label{sect3}
Having GAC in hand, where conic sections are the intrinsic geometric primitives, we can formulate a fitting algorithm for the conics analogous to the algorithm of fitting circles in CRA. Namely, for a conic represented by a vector $Q$ of the form \eqref{GACconic} and for given points represented by vectors $P_i$ of the form \eqref{GACpoint}, we assume the objective function (cost function) to be given by  
\begin{align}\label{cost}
Q\mapsto\sum_i(P_i\cdot Q)^2,
\end{align}
where we used the inner product between vectors in GAC.
The best conic fitting the points w.r.t. this function is represented by the $Q$ that minimises this function. The geometrically meaningless minimum $Q=0$ is not of the interest and thus the authors of \cite{GACFIT} consider the natural geometric constraint 
\begin{align} \label{constraint}
Q^2=1.
\end{align}
Then the objective function is zero if and only if $P_i\cdot Q=0$ for all $i$, i.e. all points lie on the conic $ Q $.
Note that the square in \eqref{constraint} is taken w.r.t. the geometric product which is the same as the inner product in this case. Both the objective function and the constraint are defined in a covariant way in terms of GAC. The objective function is obviously invariant under all conformal transformations while the constraint is invariant under rotations and scaling only. As argued in \cite{GACFIT}, it is not invariant under translations. Hence the solution of the optimisation problem minimising \eqref{cost} subject to \eqref{constraint} enjoys the same invariant properties by definition. 

This formulation of the problem of fitting conics to points in terms of GAC gives not only a geometric insight but also allows an effective computation of the solution as an eigenvector of a $4\times 4$ matrix which is easy to implement, see Proposition~\ref{solution} below. We use the matrix notation from now on since it is convenient for the implementation. Using the matrix of bilinear form \eqref{B}, the objective function \eqref{cost} reads
\begin{align*}
Q\mapsto\sum_i(P_iBQ)^2=\sum_iQ^TBP_iP_i^TBQ=Q^TPQ,
\end{align*}
and thus it is a quadratic form on $\mathbb{R}^{5,3}$ with the matrix  
\begin{align} \label{P}
P=\sum_i BP_iP_i^TB.
\end{align}
To formulate the solution of our optimisation problem we need to decompose this matrix into blocks. Note that the points in GAC lie in a six-dimensional subspace and thus the rank of the matrix is six. More precisely, it has the form  
\begin{align}
P=
\renewcommand{\arraystretch}{1.2}
\setlength\arraycolsep{3pt}
\begin{pmatrix}\label{Pblocks}
P_0 & P_1 & 0\\
P_1^T & P_c & 0\\
0 &0 &0
\end{pmatrix},
\end{align}
where $P_0$ is a $2\times 2$ matrix, $P_1$ is a $2\times 4$ matrix and $P_c$ is a $4 \times 4$ matrix. The subscript $c$ denotes that this block corresponds to the CRA part in GAC. Similarly, $B_c$ denotes the middle part of $B$ and it coincides with the matrix of the inner product in CRA, i.e.
\begin{align} \label{Bc}
	B_c = 
	\renewcommand{\arraystretch}{1.2}
	\setlength\arraycolsep{3pt}
	\begin{pmatrix}
		\phantom{-}0 &\phantom{-}0 &\phantom{-}0 & -1\phantom{|} \\
		\phantom{-}0 &\phantom{-} 1 &\phantom{-} 0 &\phantom{-} 0\phantom{|} \\
		\phantom{-}0 &\phantom{-} 0 &\phantom{-} 1 &\phantom{-} 0\phantom{|} \\
		-1 &\phantom{-} 0 &\phantom{-} 0 &\phantom{-} 0\phantom{|} \\ 
	\end{pmatrix}.
\end{align}
Thus, we are ready to formulate the following Proposition, \cite{GACFIT}.
\begin{prop} \label{solution}
	The solution to the optimisation problem \eqref{cost}, \eqref{constraint} for conic fitting in GAC is given by
	$Q=\setlength\arraycolsep{3pt}\begin{pmatrix}w &v& 0\end{pmatrix}^T,$ where $v$ is a 4-dimensional eigenvector corresponding to the minimal non-negative eigenvalue of 
	\begin{align} \label{Pcon}
	P_{con}=B_c(P_c-P_1^TP^{-1}_0P_1)
	\end{align}
	and $w$ is a 2-dimensional vector of the form
	\begin{align} \label{w}
	w=-P_0^{-1}P_1v.
	\end{align}
\end{prop}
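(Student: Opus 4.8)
The plan is to solve the constrained optimisation problem by the method of Lagrange multipliers, exploiting the block structure of $P$ in \eqref{Pblocks} to reduce the $8$-dimensional problem to a $4$-dimensional eigenvalue problem. First I would write $Q = \begin{pmatrix} w & v & z \end{pmatrix}^T$ with $w\in\mathbb{R}^2$, $v\in\mathbb{R}^4$, $z\in\mathbb{R}^2$, and observe that since the last two rows and columns of both $P$ and $B$ vanish on the relevant part, the coordinate $z$ plays no role in either the objective function $Q^TPQ$ or — after noting the structure of $B$ — can be taken to be zero without loss of generality; indeed the IPNS representation \eqref{GACconic} already has vanishing last two entries, so we restrict to that subspace. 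With $z=0$, the objective function becomes $Q^TPQ = w^TP_0w + 2w^TP_1v + v^TP_cv$.

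Next I would form the Lagrangian. The constraint $Q^2 = 1$ written with the bilinear form is $Q^TBQ = 1$; on the subspace $z=0$ this reads $2w^T B_{01} v + v^T B_c v$ for the appropriate off-diagonal block $B_{01}$ of $B$ — but inspecting \eqref{B} one sees the block coupling the first two coordinates to the middle four is zero, so the constraint decouples as well and involves only $v$ (the $w$-block of $B$ is also zero). Setting the gradient of $Q^TPQ - \lambda(Q^TBQ - 1)$ to zero gives two equations: differentiating in $w$ yields $P_0 w + P_1 v = 0$, which is exactly \eqref{w}, namely $w = -P_0^{-1}P_1 v$ (here one needs $P_0$ invertible, which holds generically for a non-degenerate point configuration); differentiating in $v$ yields $P_1^T w + P_c v = \lambda B_c v$. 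Substituting the expression for $w$ into the second equation gives $(P_c - P_1^T P_0^{-1} P_1) v = \lambda B_c v$, and since $B_c$ is invertible with $B_c^{-1}=B_c$, this is the eigenvalue problem $P_{con} v = \lambda v$ with $P_{con} = B_c(P_c - P_1^T P_0^{-1}P_1)$ as in \eqref{Pcon}.

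Finally I would identify which eigenvalue to pick. Multiplying the eigen-equation on the left by $v^T B_c$ and using the constraint, the value of the objective function at a critical point equals $\lambda$: indeed $Q^TPQ = w^TP_0w + 2w^TP_1v + v^TP_cv = v^TP_cv + w^T(P_0w+P_1v) + w^TP_1v = v^TP_cv - v^TP_1^TP_0^{-1}P_1 v = v^T B_c P_{con} v = \lambda\, v^TB_cv = \lambda$. Since the objective function \eqref{cost} is a sum of squares it is non-negative, so all relevant eigenvalues are non-negative, and the minimum is attained at the eigenvector corresponding to the smallest non-negative eigenvalue; back-substitution via \eqref{w} recovers $w$, completing the proof.

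The step I expect to be the main obstacle is the careful bookkeeping of the block decomposition of $B$ — in particular verifying that the constraint $Q^TBQ=1$ genuinely decouples and only constrains the $v$-block, so that the differentiation in $w$ produces an \emph{unconstrained} stationarity condition $P_0w + P_1v = 0$ rather than one involving $\lambda$. A secondary point requiring care is the invertibility of $P_0$, which should be addressed by noting it holds for any point configuration not contained in a degenerate conic of the corresponding special type (equivalently, whenever the $2\times2$ block is positive definite), so that the Schur complement $P_c - P_1^TP_0^{-1}P_1$ is well defined.
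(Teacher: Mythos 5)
Your proposal is correct and follows essentially the same route as the paper's proof: Lagrange multipliers with the block decomposition \eqref{Pblocks}, elimination of $w$ via the unconstrained stationarity condition $P_0w+P_1v=0$, reduction of the generalised eigenproblem $(P_c-P_1^TP_0^{-1}P_1)v=\lambda B_cv$ to a standard one using $B_c^2=I$, and the identification of the objective value at a normalised stationary point with $\lambda$ to justify selecting the least non-negative eigenvalue. The points you flag as delicate (the decoupling of the constraint to the $v$-block and the invertibility of $P_0$, which fails only for degenerate point configurations) are exactly the ones the paper also notes.
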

\begin{proof}
	Note that the inversion in \eqref{w} exists up to the case when all points lie on a single line. This singular case must be detected and treated separately. We solve the optimisation problem by the method of Lagrange multipliers.
	The constraint \eqref{constraint} of the optimisation problem reads $Q^TBQ=1$ in matrix form. Hence the Lagrange function is
	$$
	P_\lambda(Q):=Q^TPQ+\lambda (1-Q^TBQ).
	$$
	We find its stationary points by differentiation w.r.t. unknown vector $Q$ and we use the block form of $P$ given by \eqref{Pblocks}. Using the block decomposition of $Q$ from the proposition the Lagrange function reads
	$$
	P_\lambda(Q)=w^TP_0w+2w^TP_1v+v^TP_cv+\lambda(1-v^TB_cv),
	$$
	and its stationary points satisfy the following system of linear equations
	\begin{align*}
	0=\frac{\partial P_\lambda}{\partial w}&=2(P_0w+P_1v),
	\\
	0=\frac{\partial P_\lambda}{\partial v}&=2(P_1^Tw+P_cv-\lambda B_cv),
	\\
	0=\frac{\partial P_\lambda}{\partial \lambda}&=1-v^TB_cv.
	\end{align*}
	From the first equation we directly get \eqref{w} and the substitution of the corresponding term in the second equation yields the equation for generalised eigenvectors 
	\begin{align*} \label{eigenvectors}
	(P_c-P_1^TP_0^{-1}P_1)v=\lambda B_c v.
	\end{align*}
	Since $B_c$ squares to one, it can be reduced to the equation for standard eigenvectors by multiplying it by $B_c$ from left. Hence $v$-part of each stationary point of our Lagrange function is an eigenvalue of operator \eqref{Pcon}. Moreover, it must be normalised according to the third equation of the system above 
	\begin{equation}
	\label{hvezda}
	v^TB_cv=1.
	\end{equation}
	
	Note that this is not always possible since $B_c$ is not positive definite. The only thing left is to find the minimum among the stationary points. Therefore, we compute the values of the objective function \eqref{cost}. Let $v_\lambda$ denote the eigenvector corresponding to eigenvalue $\lambda$ and let $Q_\lambda$ be the corresponding conic $Q_\lambda=(-P_0^{-1}P_1v_\lambda\quad v_\lambda \quad 0)$. By means of the block decomposition of $P$ and the definition of $v_\lambda$ we compute
	$$
	P(Q_\lambda)=Q_\lambda ^T P Q_\lambda=v_\lambda ^T(P_c-P_1^TP_0^{-1}P_1)v_\lambda)\lambda v_\lambda^TB_cv_\lambda=\lambda v_\lambda ^T B_c v_\lambda =\lambda >0,
	$$ 
	assuming that $v_\lambda$ is normalised as in \eqref{hvezda}. The eigenvectors corresponding to negative eigenvalues cannot be normalised in the sense of \eqref{hvezda} and thus they are not stationary points. 
	So the minimum of the objective function is attained if $v$ is the eigenvector corresponding to the least but non-negative eigenvalue of $P_{con}$.
\end{proof}

\subsection*{Implementation}
The original fitting algorithm is very easy to implement due to the matrix formulation of Proposition~\ref{solution}. We summarise the procedure in seven steps, each of them followed by the corresponding MATLAB code. Let us just remark that all the algorithms described in the text use input data in the form of point matrix $ p $, with $ i $-th point $ p_i=(x_i,y_i)^T $ constituting the $ i $-th column, i.e.
\begin{align*}
	p = 
	\setlength\arraycolsep{3pt}
	\begin{pmatrix}
		x_1 & x_2 & \ldots & x_N \\
		y_1 & y_2 & \ldots & y_N 	
	\end{pmatrix}.
\end{align*}
\\
\rule{\linewidth}{1.5pt}
\\
\textbf{\algorithmQ}
\\
\rule[6pt]{\linewidth}{0.75pt} \vspace*{-21pt}
\begin{enumerate}
	\item Definition of matrix $B$ for GAC inner product according to \eqref{B} and its $4\times 4$ submatrix $B_c$.
	\begin{lstlisting}[framexrightmargin=-3pt, style=Matlab-editor]
B = zeros(8);
I3 = [0 0 1; 0 1 0; 1 0 0];
B(1:3,6:8) = -I3;
B(4:5,4:5) = eye(2);
B(6:8,1:3) = -I3;	
Bc = B(3:6,3:6);
	\end{lstlisting}
	\vspace{-0.6em}
	\item Formation of data matrix $D$ of size $8\times N$, where $i$-th  column is the GAC vector $P_i$ representing the $i$-th point from the data set according to \eqref{GACpoint}.
	\begin{lstlisting}[framexrightmargin=-3pt, style=Matlab-editor]
D = zeros(8,N);
D(3,:) = ones(1,N);
D(4:5,:) = p;
D(6,:) = 1/2*(p(1,:).^2 + p(2,:).^2);
D(7,:) = 1/2*(p(1,:).^2 - p(2,:).^2);
D(8,:) = p(1,:).*p(2,:);
	\end{lstlisting}
	\vspace{-0.6em}
	\item Computation of the symmetric matrix $P$ for the objective function by \eqref{P} and the definition of its blocks $P_0$, $P_1$ and $P_c$ according to \eqref{Pblocks}.
	\begin{lstlisting}[framexrightmargin=-3pt, style=Matlab-editor]
P = 1/N*B*(D*D')*B';
Pc = P(3:6,3:6);
P0 = P(1:2,1:2);
P1 = P(1:2,3:6);
	\end{lstlisting}	
	\vspace{-0.6em}
	\item Formation of matrix $P_{con}$ according to \eqref{Pcon} and computation of its eigenvalues and eigenvectors.
	\begin{lstlisting}[framexrightmargin=-3pt, style=Matlab-editor]
Pcon = Bc*(Pc-P1'*(P0\P1));
[EV,ED] = eig(Pcon);
EW = diag(ED);
	\end{lstlisting}	
	\vspace{-0.6em}
	\item Finding the eigenvector $v$ corresponding to the least non-negative eigenvalue of $P_{con}$.
	\begin{lstlisting}[framexrightmargin=-3pt, style=Matlab-editor]
k_opt = find(EW == min(EW(EW>0)));
v_opt = EV(:,k_opt);
	\end{lstlisting}
	\vspace{-0.6em}
	\item Normalisation of the optimal vector $ v $ according to constraint \eqref{hvezda}
	\begin{lstlisting}[framexrightmargin=-3pt, style=Matlab-editor]
kappa = v_opt'*Bc*v_opt;
v_opt = 1/sqrt(kappa)*v_opt; 
	\end{lstlisting}	
	\vspace{-0.6em}
	\item Computation of $w$ by \eqref{w} and forming the optimal vector $Q$ according to Proposition~\ref{solution}.
	\begin{lstlisting}[framexrightmargin=-3pt, style=Matlab-editor]
w = -P0\P1*v_opt;
Q = [w;v_opt;0;0];
	\end{lstlisting}		
\end{enumerate}
\rule[12pt]{\linewidth}{1.5pt}
Let us note that the optimal eigenvector $ v $ obtained in the step 5. of the algorithm is (thanks to MATLAB) already normalised w.r.t. the Euclidean norm, so the normalisation of the vector w.r.t. GAC norm in the step 6. might seem redundant, though in some cases it is necessary to perform it. If one is interested only in conic's representation in the Euclidean plane, then the GAC normalisation is really unnecessary, since multiplying the vector $ v $ by a non-zero constant does not affect it. Indeed, GAC is a projective space and the representatives of conics, i.e. 1-vectors, are elements of the projective cone. Nevertheless, the normalisation may affect intersection and distance calculations, because it changes the value of the inner product.

Let us also remark that the code in the \algorithmQ{} may be further optimised by modification of steps 1. and 2. For example, we can see that the first two rows of the data matrix $ D $ are by definition zero, resulting then in a block of 0's in the matrix $ P $, while this block is no further used in the consequent computation. The zero block in the matrix $ P $ (though redundant) is not that critical itself, but the computation of the matrix $ P $ described in the step 3. uses multiplication of matrix $ D $ with its transpose and thus constitutes both unnecessary element multiplications and storage of useless zero rows of matrix $ D $. Therefore, by removing first two rows from the vector \eqref{GACpoint} we can define the $ i $-th reduced GAC point vector $ \hat{P}_i $ of the form
\begin{equation*} 
	\hat{P}_i = 
	\setlength\arraycolsep{4pt}
	\begin{pmatrix}
		1& x_i& y_i& \frac12(x_i^2+y_i^2)& \frac12(x_i^2-y_i^2)& x_i y_i
	\end{pmatrix}^T
\end{equation*}
and create the corresponding reduced data matrix $ \hat{D} $ of size $ 6 \times N $, where the $ i $-th column is $ \hat{P}_i $. Additionally, by removing the first two columns and the last two rows from the matrix $ B $ of the form \eqref{B}, we also define a reduced matrix for GAC inner product
\begin{align*}
	\hat{B} = 
	\renewcommand{\arraystretch}{1.2}
	\setlength\arraycolsep{3pt}
	\begin{pmatrix}
		\phantom{-}0&\phantom{-}0&\phantom{-} 0&\phantom{-}0&\phantom{-}0&-1\phantom{|}\\
		\phantom{-}0&\phantom{-}0&\phantom{-}0&\phantom{-}0& -1&\phantom{-}0\phantom{|}\\
		\phantom{-}0&\phantom{-}0&\phantom{-} 0&-1&\phantom{-}0&\phantom{-}0\phantom{|}\\
		\phantom{-}0&\phantom{-}1&\phantom{-} 0&\phantom{-}0&\phantom{-}0&\phantom{-}0\phantom{|}\\
		\phantom{-}0&\phantom{-}0&\phantom{-}1&\phantom{-}0&\phantom{-}0&\phantom{-}0\phantom{|}\\
		-1&\phantom{-}0&\phantom{-} 0&\phantom{-}0&\phantom{-}0&\phantom{-}0\phantom{|} 
	\end{pmatrix}.
\end{align*} 
Consequently, step 1. of the algorithm can be replaced by 
\\\\
\begin{minipage}{\textwidth}
	\begin{minipage}{.05\textwidth}
		\lstset{
			showlines=true
		}	
		\begin{lstlisting}[style=Matlab-editor, rulecolor=\color{white}, mathescape]
$1 \normalfont \text{*}$
			
				
	 
	 		
			
		\end{lstlisting}
	\end{minipage}
	\hfill
	\begin{minipage}{.95\textwidth}
		\begin{lstlisting}[framexrightmargin=-5.5pt, style=Matlab-editor]
B = zeros(6);
I3 = [0 0 1;0 1 0; 1 0 0];
B(1:3,4:6) = -I3;
B(4:5,2:3) = eye(2);
B(6,1) = -1;	
Bc = B(3:6,1:4);
		\end{lstlisting}
	\end{minipage}
\end{minipage}
and step 2. by 
\\\\
\begin{minipage}{\textwidth}
	\begin{minipage}{.05\textwidth}
		\lstset{
			showlines=true
		}
		\begin{lstlisting}[style=Matlab-editor, rulecolor=\color{white}, mathescape]
$2 \normalfont \text{*}$



			
		\end{lstlisting}
	\end{minipage}
	\hfill
	\begin{minipage}{.95\textwidth}
		\begin{lstlisting}[framexrightmargin=-5.5pt, style=Matlab-editor]
D = ones(6,N);
D(2:3,:) = p;
D(4,:) = 1/2*(p(1,:).^2 + p(2,:).^2);
D(5,:) = 1/2*(p(1,:).^2 - p(2,:).^2);
D(6,:) = p(1,:).*p(2,:);
		\end{lstlisting}
	\end{minipage}
\end{minipage}	
As a result, the matrix $ P $ computed in the step 3. does not contain the zero blocks as in \eqref{Pblocks} and takes the form of a reduced matrix
\begin{align*}
	\hat{P}=
	\renewcommand{\arraystretch}{1.2}
	\setlength\arraycolsep{3pt}
	\begin{pmatrix}
		P_0 & P_1\\
		P_1^T & P_c\\
	\end{pmatrix}
\end{align*}
of size $ 6 \times 6 $. Moreover, the rest of the algorithm remains unaffected by this change, so the steps 3. to 7. may stay the same. Let us note that we can also analogously omit zeroes in GAC conic vector of the form \eqref{GACconic}, and thus create a reduced conic vector
\begin{align*}
	\hat{Q}=&
	\setlength\arraycolsep{4pt}
	\begin{pmatrix}
		\bar{v}^\times&\bar{v}^-&\bar{v}^+&v^1&v^2&v^+
	\end{pmatrix}^T,
\end{align*}
and then compute the objective function \eqref{cost} equivalently as
\begin{align*}
	\hat{Q} \mapsto \hat{Q}^T \hat{P} \hat{Q}.
\end{align*}

\section{Conic fitting in GAC - additional conditions} \label{sect4}
One of the main features of the above-presented algorithm is that the sought conic is not limited in advance by any prescribed geometric condition. In other words, parameters of the fitted conic such as tilt of its axes or its centre point position in the plane are not known beforehand, being just a result of the optimisation process. 

While this type of fit may be useful in some cases, there are problems demanding a fit with one or more additional geometric conditions constraining the output conic. In this section we focus on forming three types of fitting algorithms, each resulting in either of following conics:
\begin{enumerate}
	\item \textit{conic having its axes aligned with coordinate axes}
	\item \textit{conic having its centre point at the coordinate system origin}
	\item \textit{conic satisfying both} 1. \textit{and} 2.
\end{enumerate}

Before further discussion on the above-mentioned types of conic fitting, let us briefly recall that a conic $ Q_I $ in the IPNS representation of the form \eqref{GACconic} can be as well described as an implicit two variable function
\begin{align} \label{Conic_xy}
	Q(x,y) = \mathcal{A} x^2 + \mathcal{B} xy + \mathcal{C} y^2 + \mathcal{D} x + \mathcal{E} y + \mathcal{F} = 0
\end{align}
with the corresponding matrix 
\begin{align} \label{conic_matrix_xy} 
	Q=
	\renewcommand{\arraystretch}{1.3}
	\setlength\arraycolsep{4pt}
	\begin{pmatrix}
		\mathcal{A} & \mathcal{B}/2& \mathcal{D}/2\\
		\mathcal{B}/2 & \mathcal{C} & \mathcal{E}/2\\
		\mathcal{D}/2 & \mathcal{E}/2 & \mathcal{F}
	\end{pmatrix}.
\end{align} 

Let us just note that the coefficients of \eqref{Conic_xy} are intentionally written in calligraphic style, in order to avoid confusion in notation.

By comparing the matrices \eqref{conic_matrix_xy} and \eqref{conic_matrix} we can easily describe the relationships between the coefficients of $ Q(x,y) $ and the elements of vector $ Q_I $, for example 
\begin{align*}
	 \mathcal{F} =  -\vp.
\end{align*}

Such a direct connection between the IPNS representation and implicit function of a conic will later allow us to easily express the geometric conditions in terms of GAC. As we will see, thanks to this connection and the matrix-vector formulation of the original problem, our conic fitting algorithms with additional conditions can be described analogously to the original one and solved in a way very similar to Proposition~\ref{solution}.

\subsection{Axes-aligned conic}

\begin{theorem} \label{theorem_aligned}
	Conic $ Q $ represented in the form of vector \eqref{GACconic} is axes-aligned if and only if 
	\begin{align*}
		\vxb = 0.
	\end{align*}
\end{theorem}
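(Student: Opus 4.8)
The plan is to translate the geometric notion of ``axes-aligned'' into an algebraic condition on the conic matrix \eqref{conic_matrix_xy}, and then transport that condition back to the GAC vector \eqref{GACconic} via the dictionary already established by comparing \eqref{conic_matrix_xy} with \eqref{conic_matrix}. First I would recall that a (central) conic has its principal axes parallel to the coordinate axes exactly when the quadratic part $\mathcal{A}x^2+\mathcal{B}xy+\mathcal{C}y^2$ has no mixed term, i.e. when the upper-left $2\times2$ block of \eqref{conic_matrix_xy} is diagonal. Equivalently, the symmetric matrix $\begin{pmatrix}\mathcal{A}&\mathcal{B}/2\\\mathcal{B}/2&\mathcal{C}\end{pmatrix}$ is already diagonalised by the identity, so its eigenvectors (the axis directions) are $e_1,e_2$; this holds if and only if $\mathcal{B}=0$.

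Next I would read off from \eqref{conic_matrix} the entry $q_{12}=q_{21}=-\tfrac12\bar{v}^\times$, and from \eqref{conic_matrix_xy} the entry $q_{12}=\mathcal{B}/2$. Matching these gives $\mathcal{B}=-\bar{v}^\times$, so the no-mixed-term condition $\mathcal{B}=0$ is equivalent to $\bar{v}^\times=0$, which is precisely the claim. One should also note the degenerate/limiting cases: if $\mathcal{A}=\mathcal{C}$ the quadratic part is a multiple of $x^2+y^2$ and ``axis direction'' is only well defined once $\mathcal{B}=0$ forces it; and for parabolas the ``axes-aligned'' notion still corresponds to the absence of the cross term together with alignment of the linear part, but the vanishing of $\bar{v}^\times$ is what encodes the rotational normalisation in every case. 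I would phrase the equivalence so that it covers all conic types uniformly by working directly with the quadratic-form block rather than with explicit axis formulas.

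The only mild subtlety — and the step I expect to need the most care — is making precise what ``axes-aligned'' means for the full family of conics represented in GAC (ellipses, hyperbolas, parabolas, and their degenerate forms), since the excerpt gives explicit axis formulas only for the ellipse. I would handle this by taking as the definition that the eigenvectors of the leading $2\times2$ block of \eqref{conic_matrix_xy} are $e_1$ and $e_2$ (equivalently, the conic is invariant under the reflections $x\mapsto-x$ and $y\mapsto-y$ composed appropriately, or simply that $\mathcal{B}=0$), which is the standard and rotation-covariant formulation; everything else is the one-line identification $\mathcal{B}=-\bar{v}^\times$. Thus the proof reduces to: (i) axes-aligned $\iff\mathcal{B}=0$; (ii) $\mathcal{B}=0\iff\bar{v}^\times=0$ by \eqref{conic_matrix}; hence the result.
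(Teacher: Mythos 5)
Your proposal is correct and follows essentially the same route as the paper: identify axes-alignment with the vanishing of the cross-term coefficient $\mathcal{B}$ in \eqref{Conic_xy}, then compare the $(1,2)$ entries of \eqref{conic_matrix_xy} and \eqref{conic_matrix} to obtain $\mathcal{B}=-\bar{v}^\times$. The paper simply cites the equivalence ``axes-aligned $\iff \mathcal{B}=0$'' as generally known, whereas you additionally justify it via the eigenvectors of the leading $2\times2$ block and discuss degenerate cases, but the substance of the argument is identical.
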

\begin{proof}
	It is generally known that necessary and sufficient condition for axial alignment of a conic $ Q(x,y) $ in the form \eqref{Conic_xy} is the zero coefficient $ \mathcal{B} $, i.e.
	\begin{align*}
		Q(x,y) \text{ is axes-aligned} \iff \mathcal{B}=0.
	\end{align*}
	By comparing the entries of matrices \eqref{conic_matrix_xy} and \eqref{conic_matrix} we get 
	\begin{align*}
		\vxb = -\mathcal{B},
	\end{align*}
	therefore
	\begin{align*}
		Q \text{ is axes-aligned} \iff \vxb=0.
	\end{align*}
\end{proof}

For more details on the properties of conics, see \cite{handbook} or \cite{geombook}. 

With the axial alignment expressed in terms of GAC, we can substitute the acquired condition into the vector \eqref{GACconic}, yielding a vector IPNS representation of an axes-aligned conic $ \Qal $ of the form
\begin{align}\label{IPNS_conic_aligned}
	\Qal = 
	\setlength\arraycolsep{4pt}
	\begin{pmatrix}
		0 & \vmb & \vpb & \vi & \vii & \vp & 0 & 0
	\end{pmatrix}^T.
\end{align}

Consequently, we may formulate an optimisation problem almost identical to the original problem of minimising objective function \eqref{cost} subject to \eqref{constraint}, however, instead of searching for a general conic $ Q  $ of the form \eqref{GACconic} we will assume the optimal conic to be axes-aligned. Hence, for an axes-aligned conic $ \Qal $ of the form \eqref{IPNS_conic_aligned} and for given points represented by vectors $ P_i $ of the form \eqref{GACpoint}, the objective function reads
\begin{align}\label{cost_aligned}
	\Qal\mapsto\sum_i(P_i\cdot \Qal)^2,
\end{align}
where $ \cdot $ represents the inner product between GAC vectors. As well as in the original fitting problem, we consider the constraint
\begin{align} \label{constraint_aligned}
	{\Qal}^2=1.
\end{align}
Let us stress that the square in \eqref{constraint_aligned} is taken w.r.t. the geometric product which is the same as the inner product in this case.

To solve the upcoming problem we will use again the method of Lagrange multipliers and the block decomposition of the matrix $ P $ of the form \eqref{P}. However, because of the slight change of the resulting conic vector, it is necessary to partition matrix $ P $ in a manner different to \eqref{Pblocks}. Firstly, let us denote the (generally) non-zero entries of symmetric matrix $ P $ as $ p_{ij} $, i.e.
\begin{align} \label{Pentries}
	P = 
	\renewcommand{\arraystretch}{1.2}
	\setlength\arraycolsep{3pt}
	\begin{pmatrix}
		p_{11} & p_{21} & p_{31} & p_{41} & p_{51} & p_{61} & \phantom{i}0 & \phantom{2}0 \\
		p_{21} & p_{22} & p_{32} & p_{42} & p_{52} & p_{62} & \phantom{i}0 & \phantom{2}0 \\
		p_{31} & p_{32} & p_{33} & p_{43} & p_{53} & p_{63} & \phantom{i}0 & \phantom{2}0 \\
		p_{41} & p_{42} & p_{43} & p_{44} & p_{54} & p_{64} & \phantom{i}0 & \phantom{2}0 \\
		p_{51} & p_{52} & p_{53} & p_{54} & p_{55} & p_{65} & \phantom{i}0 & \phantom{2}0 \\
		p_{61} & p_{62} & p_{63} & p_{64} & p_{65} & p_{66} & \phantom{i}0 & \phantom{2}0 \\
		0 & 0 & 0 & 0 & 0 & 0 & \phantom{i}0 & \phantom{2}0 \\
		0 & 0 & 0 & 0 & 0 & 0 & \phantom{i}0 & \phantom{2}0 
	\end{pmatrix}.
\end{align}
The objective function \eqref{cost_aligned} in the matrix notation reads
\begin{align*}
	\Qal \mapsto {\Qal}^T P \Qal
\end{align*}
and the matrix $ P $ is thus a matrix of a quadratic form w.r.t. the vector $ \Qal $. Since the first entry in the vector $ \Qal $ is zero, we are not interested in the first row or the first column of the matrix $ P $. Therefore, the desired decomposition has the form
\begin{align} \label{Pblocks_aligned}
	P = 
	\renewcommand{\arraystretch}{1.3}
	\setlength\arraycolsep{3pt}
	\begin{pNiceArray}{c|c|cccc|cc}
		p_{11} & p_{21} & p_{31} & p_{41} & p_{51} & p_{61} & 0 & 0 \\
		\hline
		p_{21} & \Block{1-1}{P_0^{al}} & \Block{1-4}{P_1^{al}} &  &  &  & 0 & 0 \\
		\hline
		p_{31} & \Block{4-1}{{P_1^{al}}^T } & \Block{4-4}{P_c} &  &  &  & 0 & 0 \\
		p_{41} &  &  &  &  &  & 0 & 0 \\
		p_{51} &  &  &  &  &  & 0 & 0 \\
		p_{61} &  &  &  &  &  & 0 & 0 \\
		\hline
		0 & 0 & 0 & 0 & 0 & 0 & 0 & 0 \\
		0 & 0 & 0 & 0 & 0 & 0 & 0 & 0 
	\end{pNiceArray},
\end{align}
where $ P_0^{al} $ is a real number, $ P_1^{al} $ is a 4-dimensional vector and $ P_c $ is a $ 4 \times 4 $ matrix corresponding to the CRA part in GAC. Let us note that the matrices $ P_0^{al} $, $ P_1^{al} $ and $ {P_1^{al}}^T $ are created as subparts of matrices $ P_0 $, $ P_1 $ and $ {P_1}^T $  from partition \eqref{Pblocks} by omitting the first row and the first column in the matrix $ P $. Submatrix $ P_c $ remains unaffected by this change, so we keep its notation.

\begin{prop}\label{solution_aligned}
	The solution to the optimisation problem \eqref{cost_aligned}, \eqref{constraint_aligned} for conic fitting in GAC is given by
	$\Qal= \setlength\arraycolsep{3pt} \begin{pmatrix} 0 & \wal & v & 0 & 0 \end{pmatrix}^T,$ where $v$ is a 4-dimensional eigenvector corresponding to the minimal non-negative eigenvalue of 
	\begin{align*} 
		P_{con}^{al} = B_c(P_c-{P_1^{al}}^T{P_0^{al}}^{-1}P_1^{al})
	\end{align*}
	and $ \wal \equiv \vmb $ is a real number acquired as
	\begin{align} \label{w_aligned}
		\wal = -{P_0^{al}}^{-1}P_1^{al}v.
	\end{align}
\end{prop}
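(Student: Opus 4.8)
The plan is to mirror the proof of Proposition~\ref{solution} almost verbatim, substituting the aligned block decomposition \eqref{Pblocks_aligned} for \eqref{Pblocks}. First I would write the constrained optimisation problem \eqref{cost_aligned}, \eqref{constraint_aligned} in matrix form: minimise $(\Qal)^T P \Qal$ subject to $(\Qal)^T B \Qal = 1$, where $\Qal$ has the form $\begin{pmatrix} 0 & \wal & v & 0 & 0\end{pmatrix}^T$ with $\wal$ a scalar and $v \in \mathbb{R}^4$. Because the first coordinate of $\Qal$ is zero and the last two are zero, only the central $5\times 5$ block of $P$ (rows/columns $2$ through $6$) and the corresponding block of $B$ enter the computation; I would note that this block of $B$, acting on the $(\wal,v)$ part, has exactly the same shape as in the original proof — a $1\times 1$ trivial piece together with the CRA matrix $B_c$ on the $v$-part (the $\bar{n}_-$ direction pairs only with the $n_-$ direction, which is absent from $\Qal$, so it contributes nothing and $B_0^{al}$ is effectively $0$, making the scalar constraint reduce to $v^T B_c v = 1$ just as in \eqref{hvezda}).

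Next I would set up the Lagrange function $P_\lambda(\Qal) = (\wal)^2 P_0^{al} + 2\,\wal\, P_1^{al} v + v^T P_c v + \lambda(1 - v^T B_c v)$, using the block decomposition \eqref{Pblocks_aligned}, and differentiate with respect to $\wal$, $v$, and $\lambda$. The $\wal$-equation gives $P_0^{al}\wal + P_1^{al} v = 0$, i.e. \eqref{w_aligned}, valid whenever $P_0^{al}\neq 0$ (the analogue of the non-collinearity caveat — here $P_0^{al}$ is the single entry $p_{22}$, which vanishes only in a degenerate configuration that must be handled separately). Substituting $\wal = -{P_0^{al}}^{-1}P_1^{al} v$ into the $v$-equation yields the generalised eigenproblem $(P_c - {P_1^{al}}^T {P_0^{al}}^{-1} P_1^{al}) v = \lambda B_c v$, and multiplying on the left by $B_c$ (which squares to the identity) converts it to the ordinary eigenproblem $P_{con}^{al} v = \lambda v$ with $P_{con}^{al}$ as stated.

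Finally I would evaluate the objective at a stationary point $\Qal_\lambda$: using the block form of $P$ and the eigenvector equation, $(\Qal_\lambda)^T P \Qal_\lambda = v_\lambda^T (P_c - {P_1^{al}}^T {P_0^{al}}^{-1} P_1^{al}) v_\lambda = \lambda\, v_\lambda^T B_c v_\lambda = \lambda$, provided $v_\lambda$ is normalised by $v_\lambda^T B_c v_\lambda = 1$. Eigenvectors for negative eigenvalues cannot be so normalised (since $B_c$ is indefinite), hence are not admissible stationary points, so the minimum is attained at the eigenvector of $P_{con}^{al}$ belonging to the least non-negative eigenvalue, which is exactly the claimed solution.

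I do not expect any serious obstacle: the argument is structurally identical to Proposition~\ref{solution}. The only points needing care are (i) verifying that the scalar block $P_0^{al}$ and the vanishing $B$-coupling behave as claimed so that the constraint genuinely collapses to $v^T B_c v = 1$ (rather than acquiring a $\wal$-dependent term), and (ii) stating the degeneracy condition under which ${P_0^{al}}^{-1}$ fails to exist — this is the analogue of "all points on a single line" but for the aligned sub-problem, and it should be acknowledged exactly as in the original proof.
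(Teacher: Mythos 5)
Your proposal is correct and follows essentially the same route as the paper: the same Lagrange function built from the block decomposition \eqref{Pblocks_aligned}, the same reduction to the eigenproblem for $P_{con}^{al}$ via $B_c^2=I$, and the same normalisation argument selecting the least non-negative eigenvalue. The only detail the paper makes explicit that you leave implicit is the precise degenerate locus where ${P_0^{al}}$ vanishes, namely all points lying on the double-line $x^2-y^2=0$; your verification that the constraint collapses to $v^TB_cv=1$ is a correct (and slightly more explicit) account of a step the paper takes for granted.
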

\begin{proof}
	The inversion in \eqref{w_aligned} exists up to the case when all points lie on a double-line $ x^2 - y^2 = 0 $, so this case must be detected and treated separately. Let us also note that relabelling of the variable $ \vmb $ to $ \wal $ is not necessary at all and has been made only to emphasise the correspondence of this variable to the matrices $ P_0^{al} $, $ P_1^{al} $ and $ {P_1^{al}}^T$, respectively. 
	
	As already said, this approach to the solution is basically the same as in the proof of Proposition~\ref{solution}.
	Using the method of the Lagrange multipliers, we obtain the Lagrange function in the form
	$$
	P_\lambda(\Qal) := {\Qal}^TP\Qal + \lambda (1-{\Qal}^TB\Qal).
	$$
	and after applying the block decomposition \eqref{Pblocks_aligned}, it can be rewritten as
	$$
	P_\lambda(\Qal) = {\wal}^2P_0^{al} + 2{\wal}P_1^{al}v + v^TP_cv+\lambda(1-v^TB_cv).
	$$
	We find its stationary points by differentiation w.r.t. the unknown vector $\Qal$ and by solving the following system of linear equations
	\begin{align*}
		0 = &\frac{\partial P_\lambda}{\partial \wal} = 2(\wal P_0^{al} + P_1^{al}v),
		\\
		0 = &\frac{\partial P_\lambda}{\partial v} = 2(\wal {P_1^{al}}^T + P_cv-\lambda B_cv),
		\\
		0 = &\frac{\partial P_\lambda}{\partial \lambda} = 1-v^TB_cv.
	\end{align*}
	The rest of the proof proceeds analogously to the proof of Proposition~\ref{solution}. 
\end{proof}

\subsection*{Implementation} 

The algorithm for fitting an axes-aligned conic $ \Qal $ has the form very similar to \algorithmQ, containing only a few adjustments of the involved matrices. For the sake of code optimisation we follow the example of \algorithmQ{} and employ the reduced forms of vectors and matrices in the first steps of algorithm. Thus, by removing the first two zeros and the mixed term $ xy $ from the vector \eqref{GACpoint} we define the $ i $-th reduced GAC point vector $ \hat{P}_i^{al} $ of the form
\begin{equation*} 
	\hat{P}_i^{al} = 
	\setlength\arraycolsep{4pt}
	\begin{pmatrix}
		1& x_i& y_i& \frac12(x_i^2+y_i^2)& \frac12(x_i^2-y_i^2)
	\end{pmatrix}^T
\end{equation*}
and create the corresponding reduced data matrix $ \hat{D}^{al} $ of the size $ 5 \times N $, where the $ i $-th column is $ \hat{P}_i^{al} $. Additionally, by removing the rows 1, 7 and 8 and columns 1,2 and 8 from the matrix $ B $ of the form \eqref{B}, we also define a~reduced matrix for GAC inner product
\\
\vspace{-1.2em}
\begin{align*}
	\hat{B}^{al} = 
	\renewcommand{\arraystretch}{1.2}
	\setlength\arraycolsep{3pt}
	\begin{pmatrix}
		\phantom{-}0&\phantom{-}0&\phantom{-}0&\phantom{-}0&-1\phantom{|}\\
		\phantom{-}0&\phantom{-}0&\phantom{-} 0&-1&\phantom{-}0\phantom{|}\\
		\phantom{-}0&\phantom{-}1&\phantom{-} 0&\phantom{-}0&\phantom{-}0\phantom{|}\\
		\phantom{-}0&\phantom{-}0&\phantom{-}1&\phantom{-}0&\phantom{-}0\phantom{|}\\
		-1&\phantom{-}0&\phantom{-} 0&\phantom{-}0&\phantom{-}0\phantom{|} 
	\end{pmatrix},
\end{align*} 
\vspace{-0.8em}
\\
while the submatrix $ B_c $ remains the same as in \eqref{Bc}.
Consequently, the algorithm can be summarised in 7 steps analogous to the original algorithm by the following MATLAB code (individual steps are only numbered, their description would be the same as in \algorithmQ):
\\
\rule{\linewidth}{1.5pt}
\\
\textbf{\algorithmQAL}
\\
\rule[6pt]{\linewidth}{0.75pt} 
\\
\begin{minipage}{\textwidth}
	\begin{minipage}{.05\textwidth}
		\lstset{
			showlines=true
		}
		\begin{lstlisting}[style=Matlab-editor, rulecolor=\color{white}, mathescape]
$1.$





		\end{lstlisting}
	\end{minipage}
	\hfill
	\begin{minipage}{.95\textwidth}
		\begin{lstlisting}[framexrightmargin=-5.5pt, style=Matlab-editor]
B = zeros(5);
I2 = [0 1; 1 0];
B(1:2,4:5) = -I2;
B(3:4,2:3) = eye(2);
B(5,1) = -1;
Bc = B(2:5,1:4);
		\end{lstlisting}
	\end{minipage}
\end{minipage}
\begin{minipage}{\textwidth}
	\begin{minipage}{.05\textwidth}
		\lstset{
			showlines=true
		}
		\begin{lstlisting}[style=Matlab-editor, rulecolor=\color{white}, mathescape]
$2.$



		\end{lstlisting}
	\end{minipage}
	\hfill
	\begin{minipage}{.95\textwidth}
		\begin{lstlisting}[framexrightmargin=-5.5pt, style=Matlab-editor]
D = ones(5,N);
D(2:3,:) = p;
D(4,:) = 1/2*(p(1,:).^2 + p(2,:).^2);
D(5,:) = 1/2*(p(1,:).^2 - p(2,:).^2);
		\end{lstlisting}
	\end{minipage}
\end{minipage}
\begin{minipage}{\textwidth}
	\begin{minipage}{.05\textwidth}
		\lstset{
			showlines=true
		}
		\begin{lstlisting}[style=Matlab-editor, rulecolor=\color{white}, mathescape]
$3.$



		\end{lstlisting}
	\end{minipage}
	\hfill
	\begin{minipage}{.95\textwidth}
		\begin{lstlisting}[framexrightmargin=-5.5pt, style=Matlab-editor]
P = 1/N*B*(D*D')*B';
Pc = P(2:5,2:5);
P0 = P(1,1);
P1 = P(1,2:5);	
		\end{lstlisting}
	\end{minipage}
\end{minipage}
\begin{minipage}{\textwidth}
	\begin{minipage}{.05\textwidth}
		\lstset{
			showlines=true
		}
		\begin{lstlisting}[style=Matlab-editor, rulecolor=\color{white}, mathescape]
$4.$


		\end{lstlisting}
	\end{minipage}
	\hfill
	\begin{minipage}{.95\textwidth}
		\begin{lstlisting}[framexrightmargin=-5.5pt, style=Matlab-editor]
Pcon = Bc*(Pc-P1'*1/P0*P1);
[EV,ED] = eig(Pcon);
EW = diag(ED);		
		\end{lstlisting}
	\end{minipage}
\end{minipage}
\begin{minipage}{\textwidth}
	\begin{minipage}{.05\textwidth}
		\lstset{
			showlines=true
		}
		\begin{lstlisting}[style=Matlab-editor, rulecolor=\color{white}, mathescape]
$5.$
			
		\end{lstlisting}
	\end{minipage}
	\hfill
	\begin{minipage}{.95\textwidth}
		\begin{lstlisting}[framexrightmargin=-5.5pt, style=Matlab-editor]
k_opt = find(EW == min(EW(EW>0)));
v_opt = EV(:,k_opt);			
		\end{lstlisting}
	\end{minipage}
\end{minipage}
\begin{minipage}{\textwidth}
	\begin{minipage}{.05\textwidth}
		\lstset{
			showlines=true
		}
		\begin{lstlisting}[style=Matlab-editor, rulecolor=\color{white}, mathescape]
$6.$
			
		\end{lstlisting}
	\end{minipage}
	\hfill
	\begin{minipage}{.95\textwidth}
		\begin{lstlisting}[framexrightmargin=-5.5pt, style=Matlab-editor]
kappa = v_opt'*Bc*v_opt;
v_opt = 1/sqrt(kappa)*v_opt; 
		\end{lstlisting}
	\end{minipage}
\end{minipage}
\begin{minipage}{\textwidth}
	\begin{minipage}{.05\textwidth}
		\lstset{
			showlines=true
		}
		\begin{lstlisting}[style=Matlab-editor, rulecolor=\color{white}, mathescape]
$7.$
			
		\end{lstlisting}
	\end{minipage}
	\hfill
	\begin{minipage}{.95\textwidth}
		\begin{lstlisting}[framexrightmargin=-5.5pt, style=Matlab-editor]
w = -1/P0*P1*v_opt;
Q = [0;w;v_opt;0;0];
		\end{lstlisting}
	\end{minipage}
\end{minipage}
\vspace{-0.5em}
\\
\rule[12pt]{\linewidth}{1.5pt}
Computation of the matrix $ P $ in step 3. then yields a $ 5 \times 5$ reduced matrix of the form
\begin{align*} 
	\hat{P}^{al}=
	\renewcommand{\arraystretch}{1.6}
	\setlength\arraycolsep{2pt}
	\begin{pmatrix}
		P_0^{al} & P_1^{al}\\
		{P_1^{al}}^T & P_c\\
	\end{pmatrix}.
\end{align*}
Furthermore, by omitting zeroes in GAC vector of an axes-aligned conic of the form \eqref{IPNS_conic_aligned} we define a reduced conic vector
\begin{align*}
	\hat{Q}^{al}=&
	\setlength\arraycolsep{4pt}
	\begin{pmatrix}
		\vmb & \vpb & \vi & \vii & \vp 
	\end{pmatrix}^T,
\end{align*}
and formulate an objective function \eqref{cost_aligned} equivalently as
\begin{align*}
	\hat{Q}^{al} \mapsto \hat{Q}^{{al}^T} \hat{P}^{al} \hat{Q}^{al}.
\end{align*}

\subsection{Origin-centred conic} \label{subsec:4.2}

\begin{theorem} \label{theorem_centred}
	A central conic $ Q $ represented by a vector \eqref{GACconic} has its centre at the origin of the coordinate system if and only if 
	\begin{align*}
		(\vi = 0) \wedge (\vii = 0).
	\end{align*}
\end{theorem}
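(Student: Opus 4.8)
The plan is to follow the same strategy as in the proof of Theorem~\ref{theorem_aligned}: translate the classical characterisation of the centre of a conic into the GAC coordinates by comparing the matrix \eqref{conic_matrix} with the implicit-function matrix \eqref{conic_matrix_xy}.

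First I would recall the standard description of the centre of a central conic $ Q(x,y) $ of the form \eqref{Conic_xy}. By definition of ``central'', the matrix of the quadratic part $ \begin{pmatrix}\mathcal{A} & \mathcal{B}/2\\ \mathcal{B}/2 & \mathcal{C}\end{pmatrix} $ is non-singular, and the (unique) centre $ (x_c,y_c) $ is the solution of the linear system obtained from $ \nabla Q = 0 $, namely
\begin{align*}
	\begin{pmatrix}\mathcal{A} & \mathcal{B}/2\\ \mathcal{B}/2 & \mathcal{C}\end{pmatrix}\begin{pmatrix}x_c\\y_c\end{pmatrix} = -\begin{pmatrix}\mathcal{D}/2\\ \mathcal{E}/2\end{pmatrix}.
\end{align*}
Since the coefficient matrix is invertible, $ (x_c,y_c)=(0,0) $ if and only if the right-hand side vanishes, i.e.\ if and only if $ \mathcal{D}=0 $ and $ \mathcal{E}=0 $.

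Next I would read off from the comparison of the last columns of \eqref{conic_matrix_xy} and \eqref{conic_matrix} that $ \mathcal{D}/2=\tfrac12\vi $ and $ \mathcal{E}/2=\tfrac12\vii $, hence $ \mathcal{D}=\vi $ and $ \mathcal{E}=\vii $. Combining this with the previous step gives
\begin{align*}
	Q \text{ has its centre at the origin} \iff (\vi=0)\wedge(\vii=0),
\end{align*}
which is exactly the statement.

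The main obstacle is not in the computation, which is just the factor-$ \tfrac12 $ bookkeeping between the GAC normalisation \eqref{conic_matrix} and the classical one \eqref{conic_matrix_xy}, but in being precise about the hypothesis: ``central conic'' must be taken to mean that the quadratic-part block is non-degenerate, so that the centre exists and is unique and the equivalence is meaningful in both directions. The genuinely degenerate cases (in particular parabolas) have to be excluded from the statement, just as the double-line case is singled out and treated separately in Proposition~\ref{solution_aligned} and Proposition~\ref{solution}.
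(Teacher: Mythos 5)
Your proposal is correct, and for the reverse implication it takes a genuinely cleaner route than the paper. Both proofs begin identically: you compare \eqref{conic_matrix} with \eqref{conic_matrix_xy} to identify $\mathcal{D}=\vi$, $\mathcal{E}=\vii$, and both use the centrality hypothesis $4\mathcal{AC}-\mathcal{B}^2\neq 0$. The difference is in how the equivalence $(x_c,y_c)=(0,0)\iff \mathcal{D}=\mathcal{E}=0$ is established. The paper writes out the explicit centre coordinates \eqref{xc}, \eqref{yc} as quotients, notes that the forward direction is immediate by substitution, and then for the converse sets the two numerators $\mathcal{BE}-2\mathcal{CD}$ and $\mathcal{BD}-2\mathcal{AE}$ to zero and runs an eight-case analysis on which of $\mathcal{A},\mathcal{B},\mathcal{C}$ vanish, eliminating the non-central patterns and deducing $\mathcal{D}=\mathcal{E}=0$ case by case. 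You instead characterise the centre as the solution of the linear system
\begin{align*}
	\begin{pmatrix}\mathcal{A} & \mathcal{B}/2\\ \mathcal{B}/2 & \mathcal{C}\end{pmatrix}\begin{pmatrix}x_c\\y_c\end{pmatrix} = -\begin{pmatrix}\mathcal{D}/2\\ \mathcal{E}/2\end{pmatrix}
\end{align*}
whose coefficient matrix is invertible precisely because the conic is central, so the solution is the origin if and only if the right-hand side vanishes; this settles both directions at once and makes the case analysis unnecessary. (The paper's formulae \eqref{xc}, \eqref{yc} are just Cramer's rule applied to this system, so the underlying content is the same, but your packaging of the invertibility is what removes the combinatorial work.) Your closing remark about being precise that ``central'' means the quadratic-part block is non-singular, so that parabolas are excluded, matches the paper's own caveat that the denominator $4\mathcal{AC}-\mathcal{B}^2$ is non-zero for all central conics.
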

\begin{proof}
	Firstly, by comparing the entries of matrices \eqref{conic_matrix} and \eqref{conic_matrix_xy} we conclude that
	\begin{align*}
		\vi = \mathcal{D}, \\
		\vii = \mathcal{E},
	\end{align*}
	so the proof can be equivalently written in terms of the function $Q(x,y)$ coefficients in the form  \eqref{Conic_xy}.  We have to show that the central conic $ Q(x,y) $ has its centre at the origin of the coordinate system if and only if
	\begin{align*} 
		(\mathcal{D} = 0) \wedge (\mathcal{E} = 0).
	\end{align*}
	The coordinates $(x_c,y_c)$ of the central conic's centre-point may be computed using the coefficients \eqref{Conic_xy} as follows
	\begin{align}
		x_c = \frac{\mathcal{B}\mathcal{E} - 2\mathcal{CD}}{4\mathcal{AC}-\mathcal{B}^2}, \label{xc} \\
		y_c = \frac{\mathcal{BD} - 2\mathcal{AE}}{4\mathcal{AC}-\mathcal{B}^2}. \label{yc}
	\end{align}
	Let us use note that the denominator $ 4\mathcal{AC}-\mathcal{B}^2 \neq 0 $ for all central conics. Using the above formulae we can prove the theorem in two steps:
	\\\\
	I. $ (\mathcal{D} = 0) \wedge (\mathcal{E} = 0) \implies Q(x,y) $ has the centre at the origin:
	\begin{quote}
		After direct substitution into formulae \eqref{xc}, \eqref{yc} we obtain immediately that
		\begin{align*}
			(x_c, y_c) = (0,0).
		\end{align*}
	\end{quote}
	\noindent
	II. $Q(x,y) $ has the centre at the origin $ \implies (\mathcal{D} = 0) \wedge (\mathcal{E} = 0): $
	\begin{quote}
		If a central conic $Q(x,y) $ has the centre at the origin, the formulae \eqref{xc},\eqref{yc} must be equal to zero, thus after multiplying both equations by their non-zero denominator, we get
		\begin{align}
			\mathcal{BE} - 2\mathcal{CD} = 0, \label{xc0} \\
			\mathcal{BD} - 2\mathcal{AE} = 0. \label{yc0}
		\end{align}
		To prove that the coefficients $ \mathcal{D} $ and $ \mathcal{E} $ must be zero for a central conic, we will inspect behaviour of $ \mathcal{D} $ and $ \mathcal{E} $ depending on whether the remaining coefficients $\mathcal{A,B,C}$ are or are not equal to zero, respectively. Therefore, 8 possibilities summarised by the following table may occur (`0' means equal to zero and `1' means non-zero):
		\begin{table}[H]
			\centering
			\setlength{\tabcolsep}{5pt}
			\begin{tabular}{@{}ccccccccc@{}}
				\toprule
				& 1. & 2. & 3. & 4. & 5. & 6. & 7. & 8. \\ \midrule
				$ \mathcal{A} $ & 0  & 1  & 0  & 0  & 1  & 0  & 1  & 1  \\
				$ \mathcal{B} $ & 0  & 0  & 1  & 0  & 1  & 1  & 0  & 1  \\
				$ \mathcal{C} $ & 0  & 0  & 0  & 1  & 0  & 1  & 1  & 1  \\ \bottomrule
			\end{tabular}
		\end{table} 
		\noindent
		Possibility 1. violates the definition of a conic itself, because at least one of the coefficients $\mathcal{ A,B,C} $ must be non-zero. Possibilities 2. and 4. cannot occur for central conics, since the denominator $ 4\mathcal{AC}-\mathcal{B}^2 $ must be non-zero. Possibility 3. yields
		\begin{align*}
			(\mathcal{BE} = 0) \wedge (\mathcal{B} \neq 0) \implies \mathcal{E}=0, \\
			(\mathcal{BD} = 0) \wedge (\mathcal{B} \neq 0) \implies \mathcal{D}=0, \\
		\end{align*}  
		so
		\begin{align*} 
			(\mathcal{D} = 0) \wedge (\mathcal{E} = 0).
		\end{align*}
		After inspecting 5., we get
		\begin{align*}
			(\mathcal{BE} = 0) \wedge (\mathcal{B} \neq 0) \implies \mathcal{E}=0, \\
			(\mathcal{BD}-2\mathcal{AE} = 0) \wedge (\mathcal{B} \neq 0) \wedge (\mathcal{E}=0) \implies \mathcal{D}=0,
		\end{align*}
		so
		\begin{align*} 
			(\mathcal{D} = 0) \wedge (\mathcal{E} = 0).
		\end{align*}
		The approach in 6. and 7. would be similar. Finally, in 8., we multiply the equation \eqref{xc0} by a non-zero coefficient $ \mathcal{B} $ and the equation \eqref{yc0} by a non-zero $ 2\mathcal{C} $ to get the system
		\begin{align*}
			\mathcal{B}^2\mathcal{E} - 2\mathcal{BCD} = 0, \\
			2\mathcal{BCD} - 4\mathcal{ACE} = 0.
		\end{align*}
		By summing up the resulting equations we get
		\begin{align*}
			\mathcal{B}^2\mathcal{E} - 4\mathcal{ACE} = 0,
		\end{align*}
		or 
		\begin{align*}
			\mathcal{E}(\mathcal{B}^2 - 4\mathcal{AC}) = 0.
		\end{align*}
		Since $ \mathcal{B}^2 - 4\mathcal{AC} \neq 0$ for central conics, it follows that $ \mathcal{E} = 0$. Because $\mathcal{E}=0$ and $\mathcal{B}\neq 0$, we conclude from the equation \eqref{yc0} \ that $ \mathcal{D} = 0 $ as well, so
		\begin{align*} 
			(\mathcal{D} = 0) \wedge (\mathcal{E} = 0).
		\end{align*} 
	\end{quote}
\end{proof}
Description of the origin-centred conic fitting will now proceed analogously to the problem with axes-aligned conic fitting.  After substituting the acquired condition into the vector \eqref{GACconic}, we obtain the IPNS representation of an origin-centred conic $ \Qo $ in the vector form
\begin{align}\label{IPNS_conic_centred}
	\Qo = 
	\setlength\arraycolsep{4pt}
	\begin{pmatrix}
		\vxb & \vmb & \vpb & 0 & 0 & \vp & 0 & 0
	\end{pmatrix}^T.
\end{align}
Consequently, for an origin-centred conic $ \Qo $ of the form \eqref{IPNS_conic_centred} and for given points represented by vectors $ P_i $ of the form \eqref{GACpoint}, we get the objective function
\begin{align}\label{cost_centred}
	\Qo\mapsto\sum_i(P_i\cdot \Qo)^2,
\end{align}
where $ \cdot $ represents the inner product between GAC vectors. Similarly to the previous problem, we minimise the objective function \eqref{cost_centred} subject to normalisation constraint
\begin{align} \label{constraint_centred}
	{\Qo}^2=1,
\end{align}
where the square in \eqref{constraint_centred} is w.r.t. the geometric product which is the same as the inner product in this case.

Because the objective function \eqref{cost_centred} in the matrix notation reads
\begin{align*}
	\Qo \mapsto {\Qo}^T P \Qo,
\end{align*}
the matrix $P$ in the form \eqref{Pentries} allows a partition according to the vector (34) of the origin-centred conic $ \Qo $. Since both $ \vi $ and $ \vii $ are zero, it follows that the rows 4 and 5 together with the columns 4 and 5 of the matrix $ P $ are of no importance to our problem. Therefore, we consider the following block decomposition:

\begin{align} \label{Pblocks_centred}
	P = 
	\renewcommand{\arraystretch}{1.2}
	\setlength\arraycolsep{3pt}
	\begin{pNiceArray}{cc|c|cc|c|cc}
		\Block{2-2}{P_0} &  & \sim & p_{41} & p_{51} & \sim & 0 & 0 \\
		 &  & \sim & p_{42} & p_{52} & \sim & 0 & 0 \\
		\hline
		\wr & \wr & \bullet & p_{43} & p_{53} & \bullet & 0 & 0 \\
		\hline
		p_{41} & p_{42} & p_{43} & p_{44} & p_{54} & p_{64} & 0 & 0 \\
		p_{51} & p_{52} & p_{53} & p_{54} & p_{55} & p_{65} & 0 & 0 \\
		\hline
		\wr & \wr & \bullet & p_{64} & p_{65} & \bullet & 0 & 0 \\
		 \hline
		0 & 0 & 0 & 0 & 0 & 0 & 0 & 0 \\
		0 & 0 & 0 & 0 & 0 & 0 & 0 & 0 
	\end{pNiceArray},
\end{align}
where $ P_0 $ is a $ 2 \times 2 $ matrix. The cells inscribed with $ \sim $ indicate the elements of $ 2 \times 2 $ matrix $ P_1^{0} $ consisting of the two columns from the matrix $ P $ that are mutually isolated, i.e.
\begin{align*}
	P_1^{0} =
	\renewcommand{\arraystretch}{1.2}
	\setlength\arraycolsep{3pt}
	\begin{pmatrix}
		p_{31} & p_{61} \\
		p_{32} & p_{62}
	\end{pmatrix}.
\end{align*}
Due to the symmetry of matrix $ P $, the cells with $ \wr $ stands for the matrix $ {P_1^{0}}^T $. Similarly to $ {P_1^{0}} $, the isolated elements designated with $ \bullet $ comprises $ 2 \times 2 $ matrix $ P_c^{0} $ of the form
\begin{align*}
	P_c^{0} =
	\renewcommand{\arraystretch}{1.2}
	\setlength\arraycolsep{3pt}
	\begin{pmatrix}
		p_{33} & p_{63} \\
		p_{63} & p_{66}
	\end{pmatrix}.
\end{align*}
Let us remark that the matrices $ {P_1^{0}} $, $ {P_1^{0}}^T $ and $ {P_c^{0}} $ are created from the matrices $ {P_1} $, $ {P_1}^T $ and $ {P_c},$ respectively, used in the partition \eqref{Pblocks} by omitting the corresponding rows and/or columns in the matrix $ P $. Submatrix $ P_0 $ remains the same as in the original decomposition.

Next, let us recall that the zeroness of variables $ \vi $ and $ \vii $ in the vector \eqref{IPNS_conic_centred} of conic $ \Qo $ also affects the final form of the normalisation constraint \eqref{constraint_centred}. By default, the matrix formulation of this constraint is
\begin{align*}
	{\Qo}^T B \Qo = 1,
\end{align*}
but thanks to sparseness of the matrix $ B $, it can be simplified. After defining a vector
\begin{align*} 
		\vo = 
		\setlength\arraycolsep{3pt}
		\begin{pmatrix}
			\vpb & \vp
		\end{pmatrix}^T
\end{align*}
and a $ 2 \times 2 $ matrix 
\begin{align}\label{Bco}
	\Bco = 
	\renewcommand{\arraystretch}{1.2}
	\setlength\arraycolsep{3pt}
	\begin{pmatrix}
		\phantom{-}0 & -1\phantom{|} \\
		-1  & \phantom{-}0\phantom{|}
	\end{pmatrix},
\end{align}
the normalisation constraint can be reformulated as 
\begin{equation} \label{hvezda_centred}
	{\vo}^T \Bco \vo = 1.
\end{equation}
Analogously to the matrices $ {P_1^{0}} $, $ {P_1^{0}}^T $ and $ {P_c^{0}} $, the vector $ \vo $ and the matrix $ \Bco $ were created from the vector $ v $ and matrix $ B_c $ by omitting those rows and/or columns corresponding to the zero variables $ \vi $ and $ \vii $.

\begin{prop} \label{solution_centred}
	The solution to the optimisation problem \eqref{cost_centred}, \eqref{constraint_centred} for conic fitting in GAC is given by
	$\Qo = \setlength\arraycolsep{3pt} \begin{pmatrix} w & \vpb & 0 & 0 & \vp & 0 & 0 \end{pmatrix}^T,$ where $ 
	\begin{pmatrix} \vpb & \vp \end{pmatrix}^T= \vo $ constitutes a 2-dimensional eigenvector corresponding to the minimal non-negative eigenvalue of 
	\begin{align} \label{Pcon_centred}
		P_{con}^{0} = \Bco ({P_c}^0-{P_1^{0}}^T{P_0}^{-1}P_1^{0})
	\end{align}
	and $ w = \begin{pmatrix} \vxb & \vmb \end{pmatrix}^T $ is a 2-dimensional vector computed as
	\begin{align} \label{w_centred}
		w = -P_0^{-1} P_1^{0} \vo.
	\end{align}
\end{prop}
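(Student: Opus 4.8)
The plan is to follow the template set by the proofs of Proposition~\ref{solution} and Proposition~\ref{solution_aligned} almost verbatim, the only genuinely new ingredient being that the block splitting \eqref{Pblocks_centred} of $P$ is \emph{interleaved} rather than contiguous. I would first record the non-degeneracy hypothesis: the inverse $P_0^{-1}$ appearing in \eqref{w_centred} and in \eqref{Pcon_centred} exists unless the $2\times2$ Gram-type block $P_0$ is rank-deficient, which occurs only for a degenerate data configuration (all points lying on a pair of lines through the origin); this case is to be detected and treated separately, exactly like the singular cases in the two earlier propositions, and in what follows $P_0$ is assumed invertible.

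Next I would introduce the Lagrange multiplier. Writing the constraint \eqref{constraint_centred} as ${\Qo}^TB\Qo=1$, the Lagrange function is
\[
P_\lambda(\Qo):={\Qo}^TP\Qo+\lambda\bigl(1-{\Qo}^TB\Qo\bigr).
\]
The main work — and the step I expect to be the only delicate one — is to substitute the block form \eqref{Pblocks_centred} together with $w=\begin{pmatrix}\vxb&\vmb\end{pmatrix}^T$ and $\vo=\begin{pmatrix}\vpb&\vp\end{pmatrix}^T$ and verify that the quadratic form collapses to
\[
P_\lambda(\Qo)=w^TP_0w+2\,w^TP_1^{0}\vo+{\vo}^TP_c^{0}\vo+\lambda\bigl(1-{\vo}^T\Bco\vo\bigr).
\]
Three points need checking here: that the vanishing entries $\vi=\vii=0$ annihilate every contribution of rows and columns $4$ and $5$ of $P$; that the surviving coupling between the index block $\{1,2\}$ and the index block $\{3,6\}$ reassembles exactly into the $2\times2$ matrix $P_1^{0}$ (invoking symmetry of $P$ to identify the transposed cells with ${P_1^{0}}^T$); and that, by sparsity of $B$, the constraint reduces to the form \eqref{hvezda_centred}, i.e. ${\vo}^T\Bco\vo=1$ with $\Bco$ as in \eqref{Bco}.

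Finally I would find the stationary points by differentiating in $w$, $\vo$ and $\lambda$, obtaining
\[
0=P_0w+P_1^{0}\vo,\qquad 0={P_1^{0}}^Tw+P_c^{0}\vo-\lambda\Bco\vo,\qquad 1={\vo}^T\Bco\vo.
\]
The first equation yields precisely \eqref{w_centred}; substituting it into the second gives the generalised eigenvalue equation $\bigl(P_c^{0}-{P_1^{0}}^TP_0^{-1}P_1^{0}\bigr)\vo=\lambda\Bco\vo$, and since $\Bco^2=I$ (a one-line check on \eqref{Bco}), left-multiplication by $\Bco$ turns it into the ordinary eigenproblem $P_{con}^{0}\vo=\lambda\vo$ with $P_{con}^{0}$ as in \eqref{Pcon_centred}. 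As in the proof of Proposition~\ref{solution}, evaluating the objective at a stationary point $\Qo_\lambda$ normalised according to \eqref{hvezda_centred} and using the eigen-relation together with ${\vo}^T\Bco\vo=1$ yields ${\Qo_\lambda}^TP\Qo_\lambda=\lambda$; hence eigenvectors belonging to negative $\lambda$ cannot be $\Bco$-normalised and yield no admissible stationary point, so the minimiser is the eigenvector for the least non-negative eigenvalue of $P_{con}^{0}$, which is the assertion. The only real obstacle is the bookkeeping of the interleaved index sets $\{1,2\}$, $\{3,6\}$ and $\{4,5\}$ when reading $P_0$, $P_1^{0}$ and $P_c^{0}$ off \eqref{Pblocks_centred}; once the reduced quadratic form is in place, the remainder is word-for-word the earlier arguments.
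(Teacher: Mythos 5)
Your proposal is correct and takes essentially the same route as the paper: the paper's proof likewise forms the Lagrange function, applies the block decomposition \eqref{Pblocks_centred} to collapse it to $w^TP_0w+2w^TP_1^{0}\vo+{\vo}^TP_c^{0}\vo+\lambda(1-{\vo}^T\Bco\vo)$, and then refers the remaining steps (stationary-point system, reduction to the eigenproblem for $P_{con}^{0}$, exclusion of negative eigenvalues via the normalisation \eqref{hvezda_centred}) back to Proposition~\ref{solution} --- precisely the steps you write out in full. The only minor divergence is in describing the singular case for $P_0^{-1}$, where the paper repeats the ``all points on a single line'' condition of Proposition~\ref{solution} while you phrase it as a degenerate configuration on a line pair through the origin; this does not affect the argument.
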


\begin{proof}
	As in Proposition~\ref{solution}, the inversion in \eqref{w_centred} exists up to the case when all points lie on a single line.
	
	Again, using the method of Lagrange multipliers, we obtain the Lagrange function in the form
	$$
	P_\lambda(\Qo) := {\Qo}^TP\Qo + \lambda (1-{\Qo}^TB\Qo).
	$$
	and after applying the block decomposition \eqref{Pblocks_centred}, it can be rewritten as
	$$
	P_\lambda(\Qo) = w^T P_0 w + 2w^T P_1^{0} \vo + {\vo}^T P_c^0 \vo + \lambda(1-{\vo}^T \Bco \vo).
	$$
	We find its stationary points by differentiation w.r.t. the unknown vector $\Qo$ and solving the corresponding system of linear equations. Rest of the proof is analogous to proof of Proposition~\ref{solution}. 
\end{proof}

Not only that Proposition~\ref{solution_centred} formulates the solution to our fitting problem in a quite compact way, but it can also be used as a foundation for even more direct way of computation of the origin-centred conic $ \Qo. $ With the use of matrix $ P_{con}^{0} $ of the form \eqref{Pcon_centred} and its elements denoted as
\begin{align} \label{Pcon_centred_elements}
	P_{con}^{0} = 
	\renewcommand{\arraystretch}{1.1}
	\setlength\arraycolsep{3pt}
	\begin{pmatrix}
		a & r \\
		s & a
	\end{pmatrix},
\end{align}
we can indeed derive another method of computation summarised in the following proposition.
Moreover, although the proposition is based on the eigenvalue problem formulation, it reaches the solution without computing a single eigenvalue.

\begin{prop} \label{solution_centred_substitution}
	The solution to the optimisation problem \eqref{cost_centred}, \eqref{constraint_centred} for conic fitting in GAC is given by
	$\Qo =
	\setlength\arraycolsep{3pt}\begin{pmatrix} w & \vpb & 0 & 0 & \vp & 0 & 0 \end{pmatrix}^T,$ where $ 
	\setlength\arraycolsep{3pt}\begin{pmatrix} \vpb & \vp \end{pmatrix}^T = \vo $ is a vector obtained as 
	\begin{align} \label{vo_substitution}
		\vo = 
		\renewcommand{\arraystretch}{1.7}
		\begin{pmatrix}
			\vpb \\
			\vp
		\end{pmatrix}
		=
		\renewcommand{\arraystretch}{1.7}
		\begin{pmatrix}
			\pm \sqrt[4]{\tfrac{r}{4s}} \\
			\mp \tfrac{1}{2 \sqrt[4]{\tfrac{r}{4s}}}
		\end{pmatrix}
	\end{align}
	and $ w = \begin{pmatrix} \vxb & \vmb \end{pmatrix}^T $ is a vector of the form
	\begin{align} \label{w_centred_substitution}
		w = -P_0^{-1} P_1^{0} \vo.
	\end{align}
\end{prop}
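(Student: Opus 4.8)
The plan is to derive Proposition~\ref{solution_centred_substitution} directly from Proposition~\ref{solution_centred} by exploiting the very special shape of the $2\times 2$ matrix $P_{con}^{0}$ recorded in \eqref{Pcon_centred_elements}: since a $2\times 2$ eigenproblem can be solved in closed form, and the constraint \eqref{hvezda_centred} then pins down the scaling, the optimal $\vo$ can be written down explicitly without ever evaluating an eigenvalue.

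First I would explain why $P_{con}^{0}$ has equal diagonal entries and which of its eigenvalues is the admissible one. Because $B$ is symmetric, $P=\sum_i(BP_i)(BP_i)^{T}$ from \eqref{P} is symmetric and positive semidefinite, hence so is every principal submatrix, in particular the one on rows and columns $\{1,2,3,6\}$; provided $P_0$ is invertible (the collinear-points case already excluded in Proposition~\ref{solution_centred}), its Schur complement $M_0:=P_c^{0}-{P_1^{0}}^{T}P_0^{-1}P_1^{0}$ is therefore a symmetric positive semidefinite $2\times 2$ matrix, say $M_0=\begin{pmatrix} m_{11}&m_{12}\\ m_{12}&m_{22}\end{pmatrix}$. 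Left multiplication by $\Bco$ from \eqref{Bco} merely swaps the two rows and changes their sign, so $P_{con}^{0}=\Bco M_0=\begin{pmatrix}-m_{12}&-m_{22}\\ -m_{11}&-m_{12}\end{pmatrix}$, which is exactly the form \eqref{Pcon_centred_elements} with $a=-m_{12}$, $r=-m_{22}$, $s=-m_{11}$. Positive semidefiniteness of $M_0$ gives $r\le 0$, $s\le 0$ and $rs=m_{11}m_{22}\ge m_{12}^{2}=a^{2}$, the inequality that governs the rest of the argument.

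Next I would solve the $2\times 2$ eigenproblem by hand. The characteristic polynomial of $\begin{pmatrix}a&r\\ s&a\end{pmatrix}$ yields eigenvalues $a\pm\sqrt{rs}$, and $rs\ge a^{2}$ forces $a-\sqrt{rs}\le 0\le a+\sqrt{rs}$; moreover an eigenvector for $a-\sqrt{rs}$ has $\vpb\vp\ge 0$ and so cannot be rescaled to satisfy the normalisation \eqref{hvezda_centred}, which written out is $-2\vpb\vp=1$ (compare the argument discarding negative eigenvalues in the proof of Proposition~\ref{solution}). Hence $a+\sqrt{rs}$ is the minimal non-negative eigenvalue selected in Proposition~\ref{solution_centred}, and solving $\bigl(P_{con}^{0}-(a+\sqrt{rs})I\bigr)\vo=0$ gives the relation between the components which, since $r,s\le 0$, reads $\vpb=-\sqrt{r/s}\,\vp$. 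Substituting this into $-2\vpb\vp=1$ produces $\vp^{2}=\tfrac12\sqrt{s/r}$, and taking the fourth root and back-substituting yields precisely \eqref{vo_substitution}; the correlated $\pm$ signs are the usual sign ambiguity of a unit eigenvector, which leaves the represented conic unchanged. Formula \eqref{w_centred_substitution} for $w$ is then nothing but \eqref{w_centred} restated.

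I expect the only genuine friction to be sign bookkeeping: $r$ and $s$ are non-positive, so one must track signs carefully when passing to $\sqrt{rs}$, $\sqrt{r/s}$ and the fourth roots, and when arguing that $a+\sqrt{rs}$ (rather than $a-\sqrt{rs}$) is the eigenvalue whose eigenvector can be $\Bco$-normalised. The degenerate situations $m_{11}=0$ or $m_{22}=0$ (so that $s$ or $r$ vanishes and \eqref{vo_substitution} is undefined), as well as the collinear case making $P_0$ singular, should be flagged and treated separately, exactly as in Propositions~\ref{solution} and \ref{solution_centred}.
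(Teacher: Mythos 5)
Your proposal is correct, but it takes a different route through the middle of the argument than the paper does. The paper never computes an eigenvalue: it writes the two scalar equations of $(P_{con}^{0}-\lambda E)\vo=0$, multiplies them by $-\vp$ and $\vpb$ respectively and adds, so that $\lambda$ cancels and one is left with the single relation $s(\vpb)^{2}-r(\vp)^{2}=0$ valid for \emph{every} eigenvector; combining this with the normalisation $-2\vpb\vp=1$ immediately yields the quartic $4s(\vpb)^{4}-r=0$ and hence \eqref{vo_substitution}. You instead diagonalise the $2\times2$ matrix explicitly, obtain the eigenvalues $a\pm\sqrt{rs}$, and argue which one is admissible before normalising. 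Your version is longer but buys several things the paper leaves implicit: the structural facts that $P_{con}^{0}=\Bco M_{0}$ with $M_{0}$ a positive semidefinite Schur complement of \eqref{P}, hence the equal diagonal entries in \eqref{Pcon_centred_elements}, the signs $r,s\le 0$ (which guarantee $r/(4s)\ge 0$ so the fourth root in \eqref{vo_substitution} is real), the inequality $rs\ge a^{2}$ showing the selected eigenvalue $a+\sqrt{rs}$ is indeed non-negative, and the observation that the other eigendirection has $\vpb\vp\ge 0$ and therefore cannot satisfy \eqref{hvezda_centred} --- which is precisely why the paper's necessary condition pins down a unique direction. The paper's elimination of $\lambda$ is shorter and matches its advertised point of reaching the solution ``without computing a single eigenvalue''; your argument supplies the justification for why the closed form is well defined and why it is the minimiser rather than merely a stationary point. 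Both proofs end at the same place, including the correlated $\pm/\mp$ signs and the fact that \eqref{w_centred_substitution} is just \eqref{w_centred} restated, and your flagging of the degenerate cases $r=0$ or $s=0$ is a reasonable addition the paper omits.
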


\begin{proof}
	The eigenvalue problem derived in Proposition~\ref{solution_centred} reads
	\begin{equation*}
		P_{con}^{0} \vo = \lambda \vo,
	\end{equation*}
	or in the equivalent form
	\begin{equation*}
		(P_{con}^{0}- \lambda E) \vo = 0,
	\end{equation*}
	which can be expressed as a system of two equations:
	\begin{align}
		(a-\lambda) \vpb + r \vp = 0, \label{substitution_centred_eq1}\\
		s \vpb + (a-\lambda) \vp = 0. \label{substitution_centred_eq2}
	\end{align}
	After multiplying the equation \eqref{substitution_centred_eq1} by $ -\vp $ and the equation \eqref{substitution_centred_eq2} by $ \vpb $ we obtain 
	\begin{align*}
		-(a-\lambda) \vpb \vp - {r {\vp}}^2 = 0, \\
		{s {\vpb}}^2 + (a-\lambda) \vpb \vp = 0, 
	\end{align*}
	and summing the two equations up yields
	\begin{equation} \label{substitution_centred_eq3}
		{s {\vpb}}^2 - {r {\vp}}^2 = 0.
	\end{equation}
	Since $ \vo = \begin{pmatrix} \vpb & \vp \end{pmatrix}^T $ must comply with the normalisation constraint \eqref{hvezda_centred}, which can be rewritten simply as
	\begin{equation*} 
		-2 \vpb \vp = 1,
	\end{equation*} 
	either of the variables $ \vpb $ and $ \vp $ can be directly expressed in terms of the other one, for example
	\begin{equation} \label{vpb_substitution_centred}
		\vp = -\frac{1}{2\vpb}.
	\end{equation}  
	Substitution of \eqref{vpb_substitution_centred} into \eqref{substitution_centred_eq3} gives 
	\begin{equation*} 
		{s {\vpb}}^2 - \frac{r}{{4 {\vpb}}^2} = 0,
	\end{equation*}
	and after multiplying by $ {4 {\vpb}}^2 $ we have
	\begin{equation*} 
		4 {s {\vpb}}^4 - r = 0,
	\end{equation*}
	implying that
	\begin{equation*} 
		\vpb = \pm \sqrt[4]{\frac{r}{4s}},
	\end{equation*}
	which together with \eqref{vpb_substitution_centred} yields
	\begin{equation*}
		\vp = \mp \frac{1}{2 \sqrt[4]{\tfrac{r}{4s}}}.
	\end{equation*}
	The computation of the vector $ w $ given by \eqref{w_centred_substitution} is the same as in the Proposition~\ref{solution_centred}, where it was derived and proven valid.
\end{proof}

Let us remark that the notation of entries of matrix $ P_{con}^{0} $ in \eqref{Pcon_centred_elements} is intentional---since we denote the entries of matrix $ P_{con} $ of the form \eqref{Pcon} in the following way:
\begin{align} \label{Pcon_elements}
	P_{con} = 
	\renewcommand{\arraystretch}{1.2}
	\setlength\arraycolsep{2pt}
	\begin{pmatrix}
		a & \phantom{-}k & \phantom{-}l & \phantom{-}r \\
		m & \phantom{-}t & \phantom{-}b & -k \\
		n & \phantom{-}b & \phantom{-}u & -l \\
		s & -m & -n & \phantom{-}a
	\end{pmatrix},
\end{align}
it is obvious that the matrix $ P_{con}^{0} $ consists of four corner elements of the matrix $ P_{con} $. This is a consequence of the block decomposition \eqref{Pblocks_centred} of matrix $ P $ where the two rows and columns corresponding to the zero variables $ \vi $ and $ \vii $ are omitted.

Apart from two equivalent solutions formulated in Propositions~\ref{solution_centred} and \ref{solution_centred_substitution} we can reach the same solution by yet another way. In contrast with mentioned Propositions, in which we just used subparts of matrix $ P $ to reformulate the original fitting problem, we can use the original fitting approach and extend the point dataset so that we necessarily obtain an origin-centred conic as a result.

\begin{prop} \label{solution_centred_symmetrisation}
	The solution to the optimisation problem \eqref{cost_centred}, \eqref{constraint_centred} for conic fitting in GAC is given by Proposition~\ref{solution} provided that the dataset consisting of points $ p_i = (x_i,y_i) $, $ i=1,\ldots, N $, is extended by points $ p_i^{--} = (-x_i,-y_i) $, $ i=1,\ldots, N $, which constitute centrally symmetric images of points $ p_i $ w.r.t. the origin of the coordinate system.
\end{prop}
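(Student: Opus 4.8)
The plan is to show that feeding the centrally symmetrised dataset into \algorithmQ{} amounts to running the unconstrained fit with an objective function that has become \emph{even} under the point reflection $(x,y)\mapsto(-x,-y)$, and that this parity forces the operator \eqref{Pcon} of Proposition~\ref{solution} to split into two independent blocks, one of which is exactly the operator $P_{con}^{0}$ of \eqref{Pcon_centred}.

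First I would record how the reflection acts on the embedding \eqref{GACpoint}: among its components $1,x,y,\tfrac12(x^2+y^2),\tfrac12(x^2-y^2),xy$ only $x$ and $y$ are odd, so the GAC vector of $p_i^{--}=(-x_i,-y_i)$ equals $SP_i$ with $S=\operatorname{diag}(1,1,1,-1,-1,1,1,1)$. Every nonzero entry of the matrix $B$ of \eqref{B} joins two indices on which $S$ has the same sign, hence $SBS=B$; consequently the matrix \eqref{P} built from $\{p_i\}\cup\{p_i^{--}\}$ is $\widetilde{P}=\tfrac12(P+SPS)$, that is, $P$ with every entry linking an $e_1,e_2$ coordinate to a non-$e_1,e_2$ coordinate deleted. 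The same fact is visible on the cost itself: $\sum_i(P_i\cdot Q)^2+\sum_i(P_i^{--}\cdot Q)^2=2\sum_i g_i^2+2\sum_i h_i^2$, where $g_i$ gathers the part of $C(x_i,y_i)\cdot Q$ even in $(x_i,y_i)$ (the terms carrying $\vxb,\vmb,\vpb,\vp$) and $h_i=\vi x_i+\vii y_i$ the odd part, so that the two groups of unknowns decouple, as does the constraint \eqref{constraint}, since $Q^2=(\vi)^2+(\vii)^2-2\vpb\vp$.

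Next I would run Proposition~\ref{solution} on $\widetilde{P}$. Deleting the cross terms makes the $e_1,e_2$ columns of $P_1$ vanish and kills the $\{\bar n_+,n_+\}\times\{e_1,e_2\}$ part of $P_c$; since $B_c$ itself respects the same splitting --- restricting to $\Bco$ of \eqref{Bco} on $\{\bar n_+,n_+\}$ and to the identity on $\{e_1,e_2\}$ --- the Schur complement $P_c-P_1^{T}P_0^{-1}P_1$, and hence $P_{con}$, become block diagonal. In the basis ordered as $(\bar n_+,n_+\mid e_1,e_2)$ one obtains
\[
\widetilde{P}_{con}=\begin{pmatrix} P_{con}^{0} & 0\\[2pt] 0 & D\end{pmatrix},\qquad D=\tfrac1N\sum_i\begin{pmatrix} x_i^2 & x_iy_i\\ x_iy_i & y_i^2\end{pmatrix}\succeq 0,
\]
with $P_{con}^{0}$ exactly the operator \eqref{Pcon_centred}. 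An eigenvector of a block-diagonal matrix can be chosen inside a single block; if it lies in the $P_{con}^{0}$ block then $\vi=\vii=0$, and, as $P_1$ has vanishing $e_1,e_2$ columns, formula \eqref{w} reduces to \eqref{w_centred}. The vector returned by \algorithmQ{} is then of the form \eqref{IPNS_conic_centred}, so by Theorem~\ref{theorem_centred} the fitted central conic is centred at the origin; moreover its residual equals the minimal non-negative eigenvalue of $P_{con}^{0}$, in agreement with Propositions~\ref{solution_centred} and \ref{solution_centred_substitution}.

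The delicate step --- the one I expect to be the real obstacle --- is ruling out an eigenvector lying in the $D$ block. Such a vector has $\vpb=\vp=0$ and, because $P_1$ annihilates the $e_1,e_2$ coordinates, also forces $w=0$; it therefore represents the locus $\vi x+\vii y=0$, a line through the origin, which satisfies $4\mathcal{AC}-\mathcal{B}^2=0$ and is not a central conic. This is precisely the degenerate/collinear configuration already set aside in Proposition~\ref{solution}: it can be selected only when the minimal non-negative eigenvalue of $P_{con}^{0}$ exceeds the least eigenvalue of the scatter matrix $D$, i.e.\ when the points are essentially collinear through the origin, and it is detected and treated separately just as in \algorithmQ{}. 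Hence whenever \algorithmQ{} applied to the centrally symmetrised dataset returns a genuine conic, the block splitting forces that conic to be origin-centred, which is the claim of the proposition.
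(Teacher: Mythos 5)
Your proof is correct and rests on the same structural fact as the paper's own argument: after central symmetrisation the operator of Proposition~\ref{solution} becomes block diagonal with respect to the splitting $(\bar n_+,n_+)$ versus $(e_1,e_2)$, its corner block is a positive multiple of $P_{con}^{0}$ from \eqref{Pcon_centred}, and the corner-block eigenvectors reproduce the solution of Proposition~\ref{solution_centred}. The differences are worth recording. The paper obtains the sparsity pattern of $P_{con}^{0,sym}$ by direct inspection of the vectors $P_i$, $P_i^{--}$ and then explicitly computes the corner-block eigenpairs ($2(a\pm\sqrt{rs})$, etc.) to match them with those of $P_{con}^{0}$; you reach the same splitting more conceptually via the parity operator $S$ with $SBS=B$ and $\widetilde P=\tfrac12(P+SPS)$, which in addition identifies the middle block explicitly as the positive semidefinite scatter matrix $D$ — something the paper leaves as unnamed entries $t^{*},b^{*},u^{*}$. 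The substantive divergence is in excluding the middle-block eigenvectors: the paper appeals to ``the geometric nature of the problem'' (a centrally symmetric dataset must be fitted by an origin-centred conic), which is not airtight, since a middle-block eigenvector represents a line through the origin and such a line is itself a centrally symmetric locus; it must be excluded because it is a degenerate conic, not because it violates the symmetry. Your treatment names this correctly and quantifies when it can occur (least eigenvalue of $D$ below the least non-negative eigenvalue of the corner block), which is more rigorous than the paper. One small imprecision on your side: you subsume this case under ``the degenerate/collinear configuration already set aside in Proposition~\ref{solution}'', but the degeneracy set aside there is the singularity of $P_0$ (all points on one line), whereas your eigenvalue comparison is a different and weaker condition; it would be cleaner to state it as an additional explicit non-degeneracy hypothesis. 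This does not invalidate your argument — the paper's proof glosses over exactly the same case with less justification.
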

\begin{proof}
	Firstly, let us recall that every point $ p_i = (x_i,y_i) $ is in GAC represented by the vector
	\begin{align*}
		P_i=&
		\setlength\arraycolsep{4pt}
		\begin{pmatrix}
			0 &0& 1& x_i& y_i& \frac12(x_i^2+y_i^2)& \frac12(x_i^2-y_i^2)& x_iy_i
		\end{pmatrix}^T
	\end{align*}
	therefore, every point $ p_i^{--} = (-x_i,-y_i) $ is represented by a vector $ P_i^{--} $ of the form
	\begin{align*}
		P_i^{--}=&
		\setlength\arraycolsep{4pt}
		\begin{pmatrix}
			0 &0& 1& -x_i& -y_i& \frac12(x_i^2+y_i^2)& \frac12(x_i^2-y_i^2)& x_iy_i
		\end{pmatrix}^T.
	\end{align*} 
	Consequently, the corresponding matrix $ P_{con} $ is of the form
	\begin{align*} 
		P_{con}^{0,sym} = 
		\renewcommand{\arraystretch}{1.2}
		\begin{pmatrix}
			a^{*} & 0\phantom{^{*}} & 0\phantom{^{*}} & r^{*} \\
			0\phantom{^{*}} & t^{*} & b^{*} & 0\phantom{^{*}} \\
			0\phantom{^{*}} & b^{*} & u^{*} & 0\phantom{^{*}} \\
			s^{*} & 0\phantom{^{*}} & 0\phantom{^{*}} & a^{*}
		\end{pmatrix},
	\end{align*}
	where superscript asterisks are used to stress that the elements of matrix $ P_{con}^{0,sym} $ are generally different than those of the matrix $ P_{con} $ of the form \eqref{Pcon_elements} constructed from the points $ p_i $ only. Fortunately, after further inspection, one can conclude that some elements of the matrix $ P_{con}^{0,sym} $ can be expressed in terms of the entries of matrix $ P_{con} $. In particular, we get 
	\begin{align*}
		P_{con}^{0,sym} = 
		\renewcommand{\arraystretch}{1.2}
		\begin{pmatrix}
			a^{*} & 0\phantom{^{*}} & 0\phantom{^{*}} & r^{*} \\
			0\phantom{^{*}} & t^{*} & b^{*} & 0\phantom{^{*}} \\
			0\phantom{^{*}} & b^{*} & u^{*} & 0\phantom{^{*}} \\
			s^{*} & 0\phantom{^{*}} & 0\phantom{^{*}} & a^{*}
		\end{pmatrix} 
		=
		\renewcommand{\arraystretch}{1.2}
		\begin{pmatrix}
			2a & 0\phantom{^{*}} & 0\phantom{^{*}} & 2r \\
			0 & t^{*} & b^{*} & 0 \\
			0 & b^{*} & u^{*} & 0 \\
			2s & 0\phantom{^{*}} & 0\phantom{^{*}} & 2a
		\end{pmatrix}.
	\end{align*}
	Due to the special form of the matrix $ P_{con}^{0,sym} $, the set $ V^{*} $ of its eigenvectors 
	\begin{align*}
		v_1^{*} = 
		\renewcommand{\arraystretch}{1.2}
		\begin{pmatrix}
			v_{11}^{*} \\
			v_{12}^{*} \\
			v_{13}^{*} \\
			v_{14}^{*}
		\end{pmatrix}, 
		\ldots, 
		v_4^{*} = 
		\renewcommand{\arraystretch}{1.2}
		\begin{pmatrix}
			v_{41}^{*} \\
			v_{42}^{*} \\
			v_{43}^{*} \\
			v_{44}^{*} 
		\end{pmatrix}, 		
	\end{align*}
	can be described as
	\begin{align*}
		V^{*} = 
		\begin{pmatrix}
			v_1^{*} & v_2^{*} & v_3^{*} & v_4^{*}
		\end{pmatrix} 
		=
		\renewcommand{\arraystretch}{1.2}
		\begin{pmatrix}
			v_{11}^{*} & 0 & 0 & v_{41}^{*} \\
			0 & v_{22}^{*} & v_{32}^{*} & 0 \\
			0 & v_{23}^{*} & v_{33}^{*} & 0 \\
			v_{14}^{*} & 0 & 0 & v_{44}^{*}
		\end{pmatrix}
	\end{align*}
	with $ \lambda_1^{*}, \ldots, \lambda_4^{*} $ as their corresponding eigenvalues. From the geometric nature of the problem, the dataset centrally symmetric w.r.t. the coordinate origin must necessarily be fitted with origin-centred conic, therefore, neither of the eigenvectors $ v_2^{*} $ and $ v_3^{*} $ can be the solution, since they do not correspond to an origin-centred conic (also, neither of the eigenvalues $ \lambda_2^{*} $ and $ \lambda_3^{*} $ can represent the optimal solution). Regarding the eigenvalues $ \lambda_1^{*} $ and $ \lambda_4^{*} $, they are computed as
	\begin{align*}
		\renewcommand{\arraystretch}{1.4}
		\begin{pmatrix}
			\lambda_1^{*} \\
			\lambda_4^{*}
		\end{pmatrix} 
		=
		\begin{pmatrix}
			2(a + \sqrt{rs}) \\
			2(a - \sqrt{rs}) 
		\end{pmatrix}
	\end{align*} 
	and their corresponding eigenvectors as
	\begin{align*}
		\setlength\arraycolsep{4pt}
		\begin{pmatrix}
			v_1^{*} & v_4^{*}
		\end{pmatrix} 
		=
		\renewcommand{\arraystretch}{1.2}
		\begin{pmatrix}
			\sqrt{\tfrac{r}{s}} & -\sqrt{\tfrac{r}{s}} \\
			0 & 0 \\
			0 & 0 \\
			1 & 1		
		\end{pmatrix},
	\end{align*} 
	while the eigenvalues $ \lambda_1^{0} $ and $ \lambda_2^{0} $ of the matrix $ P_{con}^{0} $ can be expressed as 
	\begin{align*}
		\renewcommand{\arraystretch}{1.4}
		\begin{pmatrix}
			\lambda_1^{0} \\
			\lambda_2^{0}
		\end{pmatrix} 
		=
		\begin{pmatrix}
			a + \sqrt{rs} \\
			a - \sqrt{rs}
		\end{pmatrix}
	\end{align*}
 	with the corresponding eigenvectors $ v_1^{0} $ and $ v_2^{0} $ of the form
 	\begin{align*}
 		\setlength\arraycolsep{4pt}
 		\begin{pmatrix}
 			v_1^{0} & v_2^{0}
 		\end{pmatrix} 
 		=
 		\renewcommand{\arraystretch}{1.2}
 		\begin{pmatrix}
 			\sqrt{\tfrac{r}{s}} & -\sqrt{\tfrac{r}{s}} \\
 			1 & 1		
 		\end{pmatrix}.
 	\end{align*} 
 	We can see that 
 	\begin{align*}
 		\renewcommand{\arraystretch}{1.4}
 		\begin{pmatrix}
 			\lambda_1^{*} \\
 			\lambda_4^{*}
 		\end{pmatrix} 
 		= 2
 		\begin{pmatrix}
 			\lambda_1^{0} \\
 			\lambda_2^{0}
 		\end{pmatrix} 
 	\end{align*}
 	and that  
 	\begin{align*}
 		\setlength\arraycolsep{4pt}
 		\begin{pmatrix}
 			v_1^{*} & v_4^{*}
 		\end{pmatrix} 
 		\approx
 		\begin{pmatrix}
 			v_1^{0} & v_2^{0}
 		\end{pmatrix},
 	\end{align*} 
 	where by $ \approx $ we mean that after omitting zero elements in the eigenvectors $ v_1^{*} $ and $ v_4^{*} $, the eigenvectors of both matrices are the same (note that in the case of eigenvectors $ v_1^{0} $ and $ v_2^{0}, $ two zeroes omitted in the eigenvectors $ v_1^{*} $ and $ v_4^{*} $ would eventually be added anyway, in order to form a complete conic vector). Consequently, the optimal eigenvalue of the matrix $ P_{con}^{0,sym} $ corresponds to the same solution as the one obtained by using the matrix $ P_{con}^{0} $ in Proposition~\ref{solution_centred}, which is also equivalent to the solution acquired in Proposition~\ref{solution_centred_substitution}. 
\end{proof}
Despite the tempting simplicity of this approach, we must keep in mind that the solution in Proposition~\ref{solution_centred_symmetrisation} is reached by doubling the number of points in  the dataset. Such an enlargement of the input data necessarily results into the increase of computational demands, which is generally preferred to be avoided, especially when having a large number of points at the beginning already.

\subsection*{Implementation}
Since we have formulated three ways of fitting an origin-centred conic $ \Qo $, we describe three corresponding algorithms:
\begin{itemize}
	\item[$\bullet$] \textbf{Q0} --- via eigenvalues and eigenvectors from Proposition~\ref{solution_centred} \\
	\item[$\bullet$] \textbf{Q0-dir} --- `direct' computation using Proposition~\ref{solution_centred_substitution} without computing eigenvalues \\
	\item[$\bullet$] \textbf{Q0-sym} --- central symmetrisation of the dataset according to Proposition~\ref{solution_centred_symmetrisation} 
\end{itemize}
As in the case of \algorithmQAL, we use in Algorithms \textbf{Q0} and \textbf{Q0-dir} the reduced forms of vectors and matrices. Therefore, we define the $ i $-th reduced GAC point
\begin{equation*} 
	\hat{P}_i^{0} =
	\setlength\arraycolsep{4pt} 
	\begin{pmatrix}
		1& \frac12(x_i^2+y_i^2)& \frac12(x_i^2-y_i^2)& x_iy_i
	\end{pmatrix}^T
\end{equation*}
and create the corresponding reduced data matrix $ \hat{D}^{0} $ of the size $ 4 \times N $, where the $ i $-th column is $ \hat{P}_i^{0} $. Additionally, by removing the rows 4, 5,7 and 8 and columns 1,2, 4 and 5 from the matrix $ B $ of the form \eqref{B}, we also define a reduced matrix for GAC inner product
\begin{align*}
	\hat{B}^{0} = 
	\renewcommand{\arraystretch}{1.2}
	\setlength\arraycolsep{3pt}
	\begin{pmatrix}
		\phantom{-}0&\phantom{-}0&\phantom{-}0&-1\phantom{|}\\
		\phantom{-}0&\phantom{-}0&-1&\phantom{-}0\phantom{|}\\
		\phantom{-}0&-1&\phantom{-} 0&\phantom{-}0\phantom{|}\\
		-1&\phantom{-}0&\phantom{-} 0&\phantom{-}0\phantom{|} 
	\end{pmatrix}.
\end{align*} 
A reduction of the matrix $ B_c $ used in the following two algorithms is the matrix $ \Bco $ of the form \eqref{Bco}.
\\
\rule{\linewidth}{1.5pt}
\\
\textbf{\algorithmQo}
\\
\rule[6pt]{\linewidth}{0.75pt} 
\\
\begin{minipage}{\textwidth}
	\begin{minipage}{.05\textwidth}
		\lstset{
			showlines=true
		}
		\begin{lstlisting}[style=Matlab-editor, rulecolor=\color{white}, mathescape]
$1.$




		\end{lstlisting}
	\end{minipage}
	\hfill
	\begin{minipage}{.95\textwidth}
		\begin{lstlisting}[framexrightmargin=-5.5pt, style=Matlab-editor]
B = zeros(4);
I2 = [0 1; 1 0];
B(1:2,3:4) = -I2;
B(3:4,1:2) = -I2;
Bc = -I2;
		\end{lstlisting}
	\end{minipage}
\end{minipage}
\begin{minipage}{\textwidth}
	\begin{minipage}{.05\textwidth}
		\lstset{
			showlines=true
		}
		\begin{lstlisting}[style=Matlab-editor, rulecolor=\color{white}, mathescape]
$2.$
			
			
			
		\end{lstlisting}
	\end{minipage}
	\hfill
	\begin{minipage}{.95\textwidth}
		\begin{lstlisting}[framexrightmargin=-5.5pt, style=Matlab-editor]
D = ones(4,N);
D(2,:) = 1/2*(p(1,:).^2 + p(2,:).^2);
D(3,:) = 1/2*(p(1,:).^2 - p(2,:).^2);
D(4,:) = p(1,:).*p(2,:);
		\end{lstlisting}
	\end{minipage}
\end{minipage}
\begin{minipage}{\textwidth}
	\begin{minipage}{.05\textwidth}
		\lstset{
			showlines=true
		}
		\begin{lstlisting}[style=Matlab-editor, rulecolor=\color{white}, mathescape]
$3.$
			
			
			
		\end{lstlisting}
	\end{minipage}
	\hfill
	\begin{minipage}{.95\textwidth}
		\begin{lstlisting}[framexrightmargin=-5.5pt, style=Matlab-editor]
P = 1/N*B*(D*D')*B';
Pc = P(3:4,3:4);
P0 = P(1:2,1:2);
P1 = P(1:2,3:4);
		\end{lstlisting}
	\end{minipage}
\end{minipage}
\begin{minipage}{\textwidth}
	\begin{minipage}{.05\textwidth}
		\lstset{
			showlines=true
		}
		\begin{lstlisting}[style=Matlab-editor, rulecolor=\color{white}, mathescape]
$4.$
			
			
		\end{lstlisting}
	\end{minipage}
	\hfill
	\begin{minipage}{.95\textwidth}
		\begin{lstlisting}[framexrightmargin=-5.5pt, style=Matlab-editor]
Pcon = Bc*(Pc-P1'*(P0\P1));
[EV,ED] = eig(Pcon);
EW = diag(ED);	
		\end{lstlisting}
	\end{minipage}
\end{minipage}
\begin{minipage}{\textwidth}
	\begin{minipage}{.05\textwidth}
		\lstset{
			showlines=true
		}
		\begin{lstlisting}[style=Matlab-editor, rulecolor=\color{white}, mathescape]
$5.$
			
		\end{lstlisting}
	\end{minipage}
	\hfill
	\begin{minipage}{.95\textwidth}
		\begin{lstlisting}[framexrightmargin=-5.5pt, style=Matlab-editor]
k_opt = find(EW == min(EW(EW>0)));
v_opt = EV(:,k_opt);		
		\end{lstlisting}
	\end{minipage}
\end{minipage}
\begin{minipage}{\textwidth}
	\begin{minipage}{.05\textwidth}
		\lstset{
			showlines=true
		}
		\begin{lstlisting}[style=Matlab-editor, rulecolor=\color{white}, mathescape]
$6.$
			
		\end{lstlisting}
	\end{minipage}
	\hfill
	\begin{minipage}{.95\textwidth}
		\begin{lstlisting}[framexrightmargin=-5.5pt, style=Matlab-editor]
kappa = v_opt'*Bc*v_opt;
v_opt = 1/sqrt(kappa)*v_opt; 
		\end{lstlisting}
	\end{minipage}
\end{minipage}
\begin{minipage}{\textwidth}
	\begin{minipage}{.05\textwidth}
		\lstset{
			showlines=true
		}
		\begin{lstlisting}[style=Matlab-editor, rulecolor=\color{white}, mathescape]
$7.$
			
		\end{lstlisting}
	\end{minipage}
	\hfill
	\begin{minipage}{.95\textwidth}
		\begin{lstlisting}[framexrightmargin=-5.5pt, style=Matlab-editor]
w = -P0\P1*v_opt;
Q = [w;v_opt(1);0;0;v_opt(2);0;0];
		\end{lstlisting}
	\end{minipage}
\end{minipage}
\\
\rule[12pt]{\linewidth}{1.5pt}
Computation of the matrix $ P $ in step 3. then yields a $ 4 \times 4$ reduced matrix of the form
\begin{align*} 
	\hat{P}^{0}=
	\renewcommand{\arraystretch}{1.6}
	\setlength\arraycolsep{2pt}
	\begin{pmatrix}
		P_0 & P_1^{0}\\
		{P_1^{0}}^T & P_c^{0}\\
	\end{pmatrix}.
\end{align*}
Consequently, after defining the reduced GAC vector of the origin-centred conic
\begin{align*}
	\hat{Q}^{0}=&
	\setlength\arraycolsep{4pt}
	\begin{pmatrix}
		\vxb & \vmb & \vpb & \vp 
	\end{pmatrix}^T,
\end{align*}
the objective function \eqref{cost_centred} can be equivalently computed as
\begin{align*}
	\hat{Q}^{0} \mapsto \hat{Q}^{{0}^T} \hat{P}^{0} \hat{Q}^{0}.
\end{align*}
Due to Proposition~\ref{solution_centred_substitution} we can reuse the first steps of \algorithmQo{} in \algorithmQodir, while the other steps of the algorithm will be altered.
\\
\rule{\linewidth}{1.5pt}
\\
\textbf{\algorithmQodir}
\\
\rule[6pt]{\linewidth}{0.75pt} \vspace*{-21pt}
\begin{enumerate}[leftmargin=1.05cm]
	\item[1.–3.] See \algorithmQo.	
	\vspace{-0.6em}
	\item[4.] Formation of matrix $P_{con}^0$ according to \eqref{Pcon_centred} and extraction of its elements $ r $ and $ s $.
	\begin{lstlisting}[framexrightmargin=-3pt, style=Matlab-editor]
Pcon = Bc*(Pc-P1'*(P0\P1));
r = Pcon(1,2);
s = Pcon(2,1);
	\end{lstlisting}
	\vspace{-0.6em}	
	\item[5.] Direct computation of the optimal vector  $ v^{0} $  given by \eqref{vo_substitution}.
	\begin{lstlisting}[framexrightmargin=-3pt, style=Matlab-editor]
vpb = (r/(4*s))^0.25;  
vp = -1/(2*vpb);                      
v_opt = [vpb;vp];
	\end{lstlisting}
	\vspace{-0.6em}		
	\item[6.] Computation of $w$ by \eqref{w_centred_substitution} and forming the optimal vector $ \Qo $ according to Proposition~\ref{solution_centred_substitution}
	\begin{lstlisting}[framexrightmargin=-3pt, style=Matlab-editor]
w = -P0\P1*v_opt;
Q = [w;vpb;0;0;vp;0;0];
	\end{lstlisting}		
\end{enumerate}
\rule[12pt]{\linewidth}{1.5pt} 
Finally, we present a MATLAB code for \algorithmQosym, which is basically \algorithmQ{} preceded by prior extension of the dataset. Let us note that the reduced forms of vectors and matrices used in the following algorithm correspond to the reduced forms described in the commentary to \algorithmQ.
\\
\rule{\linewidth}{1.5pt}
\\
\textbf{\algorithmQosym}
\\
\rule[6pt]{\linewidth}{0.75pt} \vspace*{-21pt}
\begin{enumerate}[leftmargin=1.05cm]
	\item[0.] Extension of the point dataset by its centrally symmetric images w.r.t. the origin of the coordinate system.
	\begin{lstlisting}[framexrightmargin=-3pt, style=Matlab-editor]
p = [p,-p];
N = 2*N;
	\end{lstlisting}
	\vspace{-0.6em}
	\item[1.–2.] See steps 1* and 2* from the commentary to \algorithmQ 
	\vspace{-0.6em}
	\item[3.–7.] See \algorithmQ
\end{enumerate}
\rule[12pt]{\linewidth}{1.5pt} 

\subsection{Axes-aligned origin-centred conic}

\begin{theorem} 
	A central conic $ Q $ represented in the form of a vector \eqref{GACconic} has a centre at the origin of coordinate system and is axes-aligned if and only if 
	\begin{align*}
		(\vxb = 0) \wedge (\vi = 0) \wedge (\vii = 0).
	\end{align*}
\end{theorem}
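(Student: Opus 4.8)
The plan is to obtain the statement directly as the conjunction of Theorem~\ref{theorem_aligned} and Theorem~\ref{theorem_centred}: ``axes-aligned'' and ``centre at the origin'' are precisely the two geometric properties already characterised there, and the conjunction of two equivalences is the equivalence of the two conjunctions. No new machinery is needed.

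First I would recall the dictionary between the GAC coordinates of \eqref{GACconic} and the coefficients of the implicit equation \eqref{Conic_xy} that was obtained by comparing the entries of \eqref{conic_matrix} and \eqref{conic_matrix_xy} in the proofs of the two previous theorems, namely $\vxb = -\mathcal{B}$, $\vi = \mathcal{D}$ and $\vii = \mathcal{E}$. Hence the three GAC conditions $\vxb=0$, $\vi=0$, $\vii=0$ are equivalent, respectively, to $\mathcal{B}=0$, $\mathcal{D}=0$, $\mathcal{E}=0$, and it suffices to prove: a central conic $Q(x,y)$ is axes-aligned and has its centre at the origin if and only if $(\mathcal{B}=0)\wedge(\mathcal{D}=0)\wedge(\mathcal{E}=0)$.

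Next I would split the biconditional. For the forward implication, suppose $Q$ is a central conic that is both axes-aligned and origin-centred; then Theorem~\ref{theorem_aligned} applied to $Q$ gives $\vxb=0$, and Theorem~\ref{theorem_centred} applied to $Q$ (legitimate since $Q$ is central) gives $\vi=\vii=0$, which together are exactly the three asserted equalities. For the converse, assume $(\vxb=0)\wedge(\vi=0)\wedge(\vii=0)$: from $\vxb=0$ and Theorem~\ref{theorem_aligned} the conic is axes-aligned, and from $\vi=\vii=0$ together with the standing hypothesis that $Q$ is central, Theorem~\ref{theorem_centred} yields that its centre is at the origin.

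I do not expect a genuine obstacle. The only point requiring a word of care is that Theorem~\ref{theorem_centred} is stated for central conics, so the hypothesis ``$Q$ is a central conic'' must be kept throughout (it also guarantees $4\mathcal{AC}-\mathcal{B}^2\neq 0$, so that the centre is well defined), and that the two characterising conditions involve disjoint sets of coefficients ($\mathcal{B}$ on the one hand, $\mathcal{D},\mathcal{E}$ on the other), so the two properties cannot interact. If a fully self-contained argument were preferred over citing the two theorems, one could substitute $\mathcal{B}=\mathcal{D}=\mathcal{E}=0$ into \eqref{xc}, \eqref{yc} to read off $(x_c,y_c)=(0,0)$ and note that the surviving equation $\mathcal{A}x^2+\mathcal{C}y^2+\mathcal{F}=0$ is manifestly axes-aligned, while for the converse one reuses the case analysis in the proof of Theorem~\ref{theorem_centred} to force $\mathcal{D}=\mathcal{E}=0$ and the condition $\mathcal B=0$ coming from axial alignment.
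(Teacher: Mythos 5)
Your proposal is correct and matches the paper's proof, which simply observes that the theorem is a direct consequence of Theorems~\ref{theorem_aligned} and \ref{theorem_centred}; you merely spell out the conjunction of the two equivalences (and the care needed with the centrality hypothesis) in more detail than the paper does.
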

\begin{proof}
	This Theorem is a direct consequence of Theorems~\ref{theorem_aligned} and \ref{theorem_centred}. 
\end{proof}

It is evident that to describe the fitting of an axes-aligned origin-centred conic it is efficient to merge the approaches from two previous methods. After substituting the geometric conditions into the vector \eqref{GACconic}, we obtain the IPNS representation of an axes-aligned origin-centred conic $ \Qalo $
\begin{align}\label{IPNS_conic_aligned_centred}
	\Qalo = 
	\setlength\arraycolsep{3pt}
	\begin{pmatrix}
		0 & \vmb & \vpb & 0 & 0 & \vp & 0 & 0
	\end{pmatrix}^T.
\end{align}
Hence, for an axes-aligned origin-centred conic $ \Qalo $ of the form \eqref{IPNS_conic_aligned_centred} and for given points represented by vectors $ P_i $ of  the form \eqref{GACpoint}, we get the objective function
\begin{align}\label{cost_aligned_centred}
	\Qalo\mapsto\sum_i(P_i\cdot \Qalo)^2
\end{align}
with $ \cdot $ representing the inner product between GAC vectors. The normalisation constraint reads
\begin{align} \label{constraint_aligned_centred}
	{\Qalo}^2=1,
\end{align}
where the square in \eqref{constraint_aligned_centred} is w.r.t. the geometric product which is the same as the inner product in this case.

Since the elements $ \vxb $, $ \vi $ and $ \vii $ in the vector $ \Qalo $ of an axes-aligned origin-centred conic are all zero, in the partition of the matrix $ P $ we omit the rows and columns 1, 4 and 5. Therefore, the resulting block decomposition of the matrix $ P $ is of the form 

\begin{align} \label{Pblocks_aligned_centred}
	P = 
	\renewcommand{\arraystretch}{1.3}
	\setlength\arraycolsep{3pt}
	\begin{pNiceArray}{c|c|c|cc|c|cc}
		p_{11} & p_{21} & p_{31} & p_{41} & p_{51} & p_{61} & 0 & 0 \\
		\hline
		p_{21} & \Block{1-1}{P_0^{al}} & \sim & p_{42} & p_{52} & \sim & 0 & 0 \\
		\hline
		p_{34} & \wr & \bullet & p_{43} & p_{53} & \bullet & 0 & 0 \\
		\hline
		p_{41} & p_{42} & p_{43} & p_{44} & p_{54} & p_{64} & 0 & 0 \\
		p_{51} & p_{52} & p_{53} & p_{54} & p_{55} & p_{65} & 0 & 0 \\
		\hline
		p_{61} & \wr & \bullet & p_{64} & p_{65} & \bullet & 0 & 0 \\
		\hline
		0 & 0 & 0 & 0 & 0 & 0 & 0 & 0 \\
		0 & 0 & 0 & 0 & 0 & 0 & 0 & 0 
	\end{pNiceArray},
\end{align}
where $ P_0^{al} $ is the same real number as in the decomposition \eqref{Pblocks_aligned}, the cells denoted $ \bullet $ constitutes $ 2 \times 2 $ matrix $ P_c^0 $ from decomposition \eqref{Pblocks_centred} and the cells inscribed with $ \sim $ indicate the elements of vector $ P_1^{al,0} $, i.e.
\begin{align*}
	P_1^{al,0} =
	\setlength\arraycolsep{3pt}
	\begin{pmatrix}
		p_{32} & p_{62}
	\end{pmatrix}.
\end{align*}
Symmetrically, cells with $ \wr $ stand for the matrix $ {P_1^{al,0}}^T $.

The default matrix form of the normalisation constraint \eqref{constraint_aligned_centred} is 
\begin{align*}
	{Q^{al,o}}^T B Q^{al,o} = 1
\end{align*}
and due to the zero variables in  vector \eqref{IPNS_conic_aligned_centred} of the conic it can be rewritten as
\begin{equation} \label{hvezda_aligned_centred_simplified} 
	{\vo}^T \Bco \vo = 1,
\end{equation}
which coincides with the normalisation constraint \eqref{constraint_centred} used in the origin-centred conic fitting problem.

\begin{prop} \label{solution_aligned_centred}
	The solution to the optimisation problem \eqref{cost_aligned_centred}, \eqref{constraint_aligned_centred} for conic fitting in GAC is given by
	$Q^{al,0} = \setlength\arraycolsep{3pt}\begin{pmatrix} 0 & \wal & \vpb & 0 & 0 & \vp & 0 & 0 \end{pmatrix}^T,$ where $ 
	\setlength\arraycolsep{3pt}\begin{pmatrix} \vpb & \vp \end{pmatrix}^T = \vo $ constitutes a 2-dimensional eigenvector corresponding to the minimal non-negative eigenvalue of 
	\begin{align} \label{Pcon_aligned_centred}
		P_{con}^{al,0} = \Bco ({P_c}^0-{P_1^{al,0}}^T{P_0^{al}}^{-1}P_1^{al,0})
	\end{align}
	and $ \wal \equiv \vmb $ is a real number computed as
	\begin{align} \label{w_aligned_centred}
		\wal = -{P_0^{al}}^{-1} P_1^{al,0} \vo.
	\end{align}
\end{prop}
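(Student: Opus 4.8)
The plan is to follow the now-familiar Lagrange-multiplier scheme used in the proofs of Propositions~\ref{solution}, \ref{solution_aligned} and \ref{solution_centred}, simply merging the two reductions: the one coming from $\vxb=0$ (as in Proposition~\ref{solution_aligned}) and the one coming from $\vi=\vii=0$ (as in Proposition~\ref{solution_centred}). Concretely, I would start from the Lagrange function
$$
P_\lambda(\Qalo) := {\Qalo}^T P \Qalo + \lambda\bigl(1 - {\Qalo}^T B \Qalo\bigr),
$$
and substitute the block decomposition \eqref{Pblocks_aligned_centred} of $P$ together with the simplified normalisation \eqref{hvezda_aligned_centred_simplified}. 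Because the only surviving coordinates of $\Qalo$ are the scalar $\wal\equiv\vmb$ and the $2$-vector $\vo=(\vpb,\vp)^T$, the Lagrange function collapses to
$$
P_\lambda(\Qalo) = {\wal}^2 P_0^{al} + 2\,\wal\, P_1^{al,0}\vo + {\vo}^T P_c^0 \vo + \lambda\bigl(1 - {\vo}^T \Bco \vo\bigr).
$$

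Next I would compute the stationary points by differentiating with respect to $\wal$, $\vo$ and $\lambda$, obtaining the system
\begin{align*}
0 &= \tfrac{\partial P_\lambda}{\partial \wal} = 2\bigl(\wal P_0^{al} + P_1^{al,0}\vo\bigr), \\
0 &= \tfrac{\partial P_\lambda}{\partial \vo} = 2\bigl(\wal\,{P_1^{al,0}}^T + P_c^0\vo - \lambda \Bco \vo\bigr), \\
0 &= \tfrac{\partial P_\lambda}{\partial \lambda} = 1 - {\vo}^T \Bco \vo.
\end{align*}
From the first equation one reads off $\wal = -{P_0^{al}}^{-1} P_1^{al,0}\vo$, which is exactly \eqref{w_aligned_centred}; here I would insert the remark (paralleling Proposition~\ref{solution_aligned}) that $P_0^{al}$ is a nonzero real number except in the degenerate case when all data points lie on the double-line $x^2-y^2=0$, which must be excluded. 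Substituting this expression into the second equation yields the generalised eigenvalue problem $({P_c}^0 - {P_1^{al,0}}^T{P_0^{al}}^{-1}P_1^{al,0})\vo = \lambda \Bco\vo$, and since $\Bco^2 = E$ (the matrix \eqref{Bco} squares to the identity), multiplying on the left by $\Bco$ converts it to the ordinary eigenvalue problem $P_{con}^{al,0}\vo = \lambda\vo$ with $P_{con}^{al,0}$ as in \eqref{Pcon_aligned_centred}.

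Finally, exactly as in Proposition~\ref{solution}, I would evaluate the objective function at a normalised eigenvector $\vo_\lambda$: using the eigenvalue relation and the constraint ${\vo_\lambda}^T\Bco\vo_\lambda = 1$ one gets ${\Qalo_\lambda}^T P \Qalo_\lambda = \lambda$, so the minimiser is attained at the eigenvector of $P_{con}^{al,0}$ corresponding to the least non-negative eigenvalue; eigenvectors belonging to negative eigenvalues cannot be normalised w.r.t. the indefinite form $\Bco$ and are therefore not admissible stationary points. I do not expect any genuine obstacle here — the argument is a routine amalgamation of the two preceding proofs — the only points requiring a little care are the non-definiteness of $\Bco$ (already handled in Proposition~\ref{solution_centred}) and the correct bookkeeping of which rows and columns of $P$ survive the combined reduction, which is already recorded in the decomposition \eqref{Pblocks_aligned_centred}. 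Hence the proof can legitimately be abbreviated to "the rest proceeds analogously to the proof of Proposition~\ref{solution_centred}."
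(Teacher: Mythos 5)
Your proposal is correct and follows essentially the same route as the paper: the same Lagrange function, the same block decomposition \eqref{Pblocks_aligned_centred} and simplified constraint \eqref{hvezda_aligned_centred_simplified}, leading to the same stationary-point system, the reduction to the eigenvalue problem for $P_{con}^{al,0}$ via $\Bco^2=E$, and the selection of the least non-negative eigenvalue. The paper merely abbreviates the final steps by reference to Proposition~\ref{solution}, whereas you spell them out; the content is identical.
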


\begin{proof}
	Let us recall that, as well as in Proposition~\ref{solution_aligned}, the inversion in \eqref{w_aligned_centred} exists up to the case when all points lie on double-line $ x^2 - y^2 = 0 $.
	
	Using the method of Lagrange multipliers, we acquire the Lagrange function in the form
	$$
	P_\lambda(\Qalo) := {\Qalo}^TP\Qalo + \lambda (1-{\Qalo}^TB\Qalo)
	$$
	and after applying the block decomposition \eqref{Pblocks_aligned_centred}, it can be rewritten as
	$$
	P_\lambda(\Qalo) = {\wal}^T P_0^{al} \wal + 2{\wal}^T P_1^{al,0} \vo + {\vo}^T P_c^0 \vo + \lambda(1-{\vo}^T \Bco \vo).
	$$
	After differentiation w.r.t. the unknown vector $\Qalo$ and solving the resulting system of linear equations, the rest of the proof proceeds similarly to proof of Proposition~\ref{solution}. 
\end{proof}

As in the case of the origin-centred conic fitting, we apply simple normalisation constraint \eqref{hvezda_aligned_centred_simplified} and with the use of the matrix $ P_{con}^{al,0} $ in the form \eqref{Pcon_aligned_centred} reach the solution without computing a single eigenvalue.
Since the matrix $ P_{con}^{al,0} $ is generally different from $ P_{con}^{0} $ and does not consist of the entries of matrix $ P_{con} $, we must distinguish the notation of its elements as well, for instance by adding apostrophes:
\begin{align} \label{Pcon_aligned_centred_elements}
	P_{con}^{al,0} = 
	\renewcommand{\arraystretch}{1.2}
	\setlength\arraycolsep{3pt}
	\begin{pmatrix} 
		a' & r' \\
		s' & a'
	\end{pmatrix}.
\end{align}

\begin{prop} \label{solution_aligned_centred_substitution}
	The solution to the optimisation problem \eqref{cost_aligned_centred}, \eqref{constraint_aligned_centred} for conic fitting in GAC is given by
	$\Qalo = \setlength\arraycolsep{3pt} \begin{pmatrix} 0 & \wal & \vpb & 0 & 0 & \vp & 0 & 0 \end{pmatrix}^T,$ where $ 
	\setlength\arraycolsep{3pt} \begin{pmatrix} \vpb & \vp \end{pmatrix}^T = \vo $ is a vector obtained as 
	\begin{align} \label{valo_substitution}
		\vo = 
		\renewcommand{\arraystretch}{1.7}
		\begin{pmatrix}
			\vpb \\
			\vp
		\end{pmatrix}
		=
		\renewcommand{\arraystretch}{1.7}
		\begin{pmatrix}
			\pm \sqrt[4]{\tfrac{r'}{4s'}} \\
			\mp \tfrac{1}{2 \sqrt[4]{\tfrac{r'}{4s'}}}
		\end{pmatrix}
	\end{align}
	and $ \wal \equiv \vmb $ is a real number computed as
	\begin{align} \label{w_aligned_centred_substitution}
		\wal = -{P_0^{al}}^{-1} P_1^{al,0} \vo.
	\end{align}
\end{prop}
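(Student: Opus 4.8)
The plan is to imitate the argument of Proposition~\ref{solution_centred_substitution}: because $P_{con}^{al,0}$ is, by \eqref{Pcon_aligned_centred_elements}, a $2\times 2$ matrix with equal diagonal entries, the eigenvalue can be eliminated algebraically instead of being computed. First I would take the eigenvalue equation $P_{con}^{al,0}\vo=\lambda\vo$ supplied by Proposition~\ref{solution_aligned_centred} and write it as the scalar pair $(a'-\lambda)\vpb+r'\vp=0$ and $s'\vpb+(a'-\lambda)\vp=0$. Multiplying the first by $-\vp$ and the second by $\vpb$ and adding eliminates $\lambda$ and leaves the single relation $s'(\vpb)^2-r'(\vp)^2=0$.

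Next I would bring in the normalisation constraint. Using the explicit form \eqref{Bco} of $\Bco$, the constraint \eqref{hvezda_aligned_centred_simplified} reads $-2\vpb\vp=1$, so $\vp=-1/(2\vpb)$; substituting this into $s'(\vpb)^2-r'(\vp)^2=0$ and clearing denominators gives $4s'(\vpb)^4=r'$, whence $\vpb=\pm\sqrt[4]{r'/4s'}$ and, back-substituting, $\vp=\mp\bigl(2\sqrt[4]{r'/4s'}\bigr)^{-1}$, which is exactly \eqref{valo_substitution}. The two sign choices are negatives of one another and therefore represent the same conic projectively, so the ambiguity is immaterial. The remaining formula \eqref{w_aligned_centred_substitution} for $\wal$ coincides with \eqref{w_aligned_centred}, which was already derived and verified inside the proof of Proposition~\ref{solution_aligned_centred}, so nothing further is needed there; $\Qalo$ is then assembled from $\wal$ and $\vo$ as in \eqref{IPNS_conic_aligned_centred}.

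The only point that really requires attention --- and the one I expect to be the main, if modest, obstacle --- is to confirm both that \eqref{valo_substitution} is well posed and that the vector it produces is genuinely the optimiser, not merely a stationary point. Well-posedness needs $P_0^{al}\neq 0$ (a scalar here), which, exactly as in the remark following \eqref{w_aligned_centred}, fails precisely when all data points lie on the double line $x^2-y^2=0$, and it needs $r'/s'\ge 0$ so that the fourth root is real; this last condition holds automatically, since by \eqref{Bco} the numbers $r'$ and $s'$ are the negatives of the diagonal entries of the positive-semidefinite Schur complement ${P_c}^0-{P_1^{al,0}}^T{P_0^{al}}^{-1}P_1^{al,0}$ and hence share their sign. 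For optimality, the decisive fact --- the same one behind Propositions~\ref{solution} and \ref{solution_centred_substitution} --- is that of the two eigenvalues $a'\pm\sqrt{r's'}$ of $P_{con}^{al,0}$ only one eigendirection has $\vpb\vp$ of the sign forced by $-2\vpb\vp=1$, and that eigenvalue is the non-negative one; the elimination above lands on it automatically, and since the value of \eqref{cost_aligned_centred} at the corresponding conic equals that eigenvalue --- by the computation in the proof of Proposition~\ref{solution} --- it is indeed the minimal admissible one.
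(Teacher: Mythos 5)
Your proof is correct and takes essentially the same route as the paper, whose own proof simply declares the argument analogous to that of Proposition~\ref{solution_centred_substitution} with $P_{con}^{0}$ replaced by $P_{con}^{al,0}$ and reuses \eqref{w_aligned_centred} for $\wal$; your elimination of $\lambda$, the use of $-2\vpb\vp=1$, and the back-substitution match that template exactly. Your added verifications --- that $r'$ and $s'$ are the negated diagonal entries of a positive-semidefinite Schur complement so the fourth root is real, and that the normalisation constraint singles out the admissible (non-negative) eigendirection --- are sound and merely make explicit what the paper leaves implicit.
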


\begin{proof}
	The eigenvalue problem derived in Proposition~\ref{solution_aligned_centred} reads
	\begin{equation*}
		P_{con}^{al,0} \vo = \lambda \vo
	\end{equation*}
	and is almost the same as in the proof of Proposition~\ref{solution_centred_substitution}, only the matrix $ P_{con}^{0} $ is replaced with $ P_{con}^{al,0} $. Therefore, the rest of the proof for the vector $v^0$ computation proceeds in a way analogous to the Proposition~\ref{solution_centred_substitution}, only using the elements of $ P_{con}^{al,0} $. 
	
	Additionally, computation of the vector $ \wal $ given by \eqref{w_aligned_centred_substitution} is the same as in Proposition~\ref{solution_aligned_centred}.
\end{proof}

Similarly to fitting an origin-centred conic in subsection~\ref{subsec:4.2}, it is possible to obtain the solution using \algorithmQ{} on an appropriately extended dataset.
\begin{prop} \label{solution_aligned_centred_symmetrisation}
	The solution to the optimisation problem \eqref{cost_aligned_centred}, \eqref{constraint_aligned_centred} for conic fitting in GAC is given by Proposition~\ref{solution} provided that the dataset consisting of the points $ p_i = (x_i,y_i) $, $ i=1,\ldots, N $, is extended by points $ p_i^{--} = (-x_i,-y_i) $, $ i=1,\ldots, N $, which constitute centrally symmetric images of points $ p_i $ w.r.t. the origin of the coordinate system and by points $ p_i^{+-} = (x_i,-y_i) $, $ i=1,\ldots, N $, and points $ p_i^{-+} = (-x_i,y_i) $, $ i=1,\ldots, N $, which are axially symmetric images of points $ p_i $ w.r.t. y-axis and x-axis, respectively.
\end{prop}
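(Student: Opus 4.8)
The plan is to mimic the proof of Proposition~\ref{solution_centred_symmetrisation}, now exploiting the full $\mathbb{Z}_2\times\mathbb{Z}_2$ symmetry of the enlarged point set rather than a single central inversion. First I would record the GAC vectors of the three reflected copies of a point $p_i=(x_i,y_i)$:
\begin{align*}
	P_i^{--}&=\begin{pmatrix}0&0&1&-x_i&-y_i&\tfrac12(x_i^2+y_i^2)&\tfrac12(x_i^2-y_i^2)&x_iy_i\end{pmatrix}^T,\\
	P_i^{+-}&=\begin{pmatrix}0&0&1&x_i&-y_i&\tfrac12(x_i^2+y_i^2)&\tfrac12(x_i^2-y_i^2)&-x_iy_i\end{pmatrix}^T,\\
	P_i^{-+}&=\begin{pmatrix}0&0&1&-x_i&y_i&\tfrac12(x_i^2+y_i^2)&\tfrac12(x_i^2-y_i^2)&-x_iy_i\end{pmatrix}^T,
\end{align*}
and then assemble, for the quadrupled data set, the matrix $P$ of \eqref{P} and the operator $P_{con}$ of \eqref{Pcon} exactly as in \algorithmQ{}; the goal is to show that \algorithmQ{} then returns a vector of the form \eqref{IPNS_conic_aligned_centred} whose nontrivial part solves the eigenvalue problem \eqref{Pcon_aligned_centred}.

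The core observation I would prove is that each coordinate of $BP_i$ is an eigenfunction of both reflections $(x,y)\mapsto(-x,y)$ and $(x,y)\mapsto(x,-y)$: the slots attached to $\bar n_-,\bar n_+,n_+$ (and $n_-$) are invariant, the $e_1$-slot changes sign only under the first, the $e_2$-slot only under the second, and the $\bar n_\times$- and $n_\times$-slots change sign under each. Summing $BP_iP_i^TB$ over the full orbit of every $p_i$ therefore annihilates every product of two slots carrying different characters, so the symmetrised $P$ is block-diagonal with respect to the splitting of coordinates into $\{\bar n_-,\bar n_+,n_+,n_-\}$, $\{e_1\}$, $\{e_2\}$ and $\{\bar n_\times,n_\times\}$. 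This is the same cancellation that produced the sparse $P_{con}^{0,sym}$ in Proposition~\ref{solution_centred_symmetrisation}, but with the extra feature that the $e_1$- and $e_2$-directions now decouple as well, and that $P_0$ itself becomes diagonal (since $\bar n_\times$ is odd and $\bar n_-$ even under each reflection). Propagating this through the block decomposition \eqref{Pblocks} I expect to obtain $P_0=\mathrm{diag}(p_{11},p_{22})$, a vanishing first row of $P_1$, and a $P_{con}$ that splits as a $2\times2$ block on $\{\bar n_+,n_+\}$ together with two scalars $p_{44}$ on $e_1$ and $p_{55}$ on $e_2$, which are positive multiples of $\sum_i x_i^2$ and of $\sum_i y_i^2$. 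Hence every eigenvector $v$ of $P_{con}$ is either supported on $\{\bar n_+,n_+\}$, forcing $\vi=\vii=0$, or is a pure $e_1$- or $e_2$-direction; and in the first case \eqref{w} gives $w=(\vxb\ \ \vmb)^T$ with $\vxb=0$, because the first row of $P_1$ is zero. So the conic produced by \algorithmQ{} has exactly the shape \eqref{IPNS_conic_aligned_centred}.

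Finally I would match this with Propositions~\ref{solution_aligned_centred} and \ref{solution_aligned_centred_substitution}: the $2\times2$ block of $P_{con}$ on $\{\bar n_+,n_+\}$ coincides, up to an overall positive scalar, with $P_{con}^{al,0}$ of \eqref{Pcon_aligned_centred}, since the surviving entries of $P_0,P_1,P_c$ are precisely $P_0^{al},P_1^{al,0},P_c^0$; its eigenvectors agree with those of $P_{con}^{al,0}$ after inserting the zero $e_1,e_2$ components, and the vector $w$ reduces to $\wal$ of \eqref{w_aligned_centred}. The one step that really needs an argument — exactly as in the proof of Proposition~\ref{solution_centred_symmetrisation} — is to show that the minimal non-negative eigenvalue of $P_{con}$ is attained on the $\{\bar n_+,n_+\}$ block rather than at the degenerate lines $x=0$ or $y=0$ carried by the $e_1$- and $e_2$-eigenvectors; I would deduce this from the geometric nature of the problem (the quadrupled data set is invariant under both axial reflections, hence so is its best fit), excluding the familiar degenerate configurations — all points on a coordinate axis, or all points on the double-line $x^2-y^2=0$ — just as in the earlier propositions. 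I expect that genericity issue to be the only real obstacle; everything else is the routine block bookkeeping already carried out for Propositions~\ref{solution}, \ref{solution_centred_symmetrisation}, \ref{solution_aligned_centred} and \ref{solution_aligned_centred_substitution}.
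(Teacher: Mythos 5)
Your proposal is correct and follows essentially the same route as the paper's proof: quadruple the data set, observe that the symmetrised $P_{con}$ decouples into a $\{\bar n_+,n_+\}$ corner block equal to $4P_{con}^{al,0}$ plus isolated $e_1$- and $e_2$-directions, discard the latter eigenvectors on geometric grounds, and match the remaining eigenvalues and eigenvectors (up to the factor $4$ and the inserted zeros) with Propositions~\ref{solution_aligned_centred} and \ref{solution_aligned_centred_substitution}. Your $\mathbb{Z}_2\times\mathbb{Z}_2$ character argument gives a cleaner justification of the sparsity that the paper only asserts ``after further inspection,'' and your explicit check that the first row of $P_1$ vanishes (so that $\vxb=0$ in $w$), together with the corrected sign $-x_iy_i$ in the last slot of $P_i^{+-}$ and $P_i^{-+}$, supplies details the paper leaves implicit or mistypes.
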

\begin{proof}
	Proof is conducted in a way similar to the proof of Proposition~\ref{solution_centred_symmetrisation}. Let us note that the points $ p_i $, $ p_i{--} $, $ p_i{+-} $, $ p_i{-+} $ are in GAC represented by respective vectors
	\begin{align*}
		P_i=&
		\setlength\arraycolsep{4pt}
		\begin{pmatrix}
			0 &0& 1& \phantom{-}x_i& \phantom{-}y_i& \frac12(x_i^2+y_i^2)& \frac12(x_i^2-y_i^2)& x_iy_i
		\end{pmatrix}^T, \\
		P_i^{--}=&
		\setlength\arraycolsep{4pt}
		\begin{pmatrix}
			0 &0& 1& -x_i& -y_i& \frac12(x_i^2+y_i^2)& \frac12(x_i^2-y_i^2)& x_iy_i
		\end{pmatrix}^T, \\
		P_i^{+-}=&
		\setlength\arraycolsep{4pt}
		\begin{pmatrix}
			0 &0& 1& \phantom{-}x_i& -y_i& \frac12(x_i^2+y_i^2)& \frac12(x_i^2-y_i^2)& x_iy_i
		\end{pmatrix}^T, \\
		P_i^{-+}=&
		\setlength\arraycolsep{4pt}
		\begin{pmatrix}
			0 &0& 1& -x_i& \phantom{-}y_i& \frac12(x_i^2+y_i^2)& \frac12(x_i^2-y_i^2)& x_iy_i
		\end{pmatrix}^T.
	\end{align*}
	Consequently, the matrix $ P_{con} $ in our case is of the form
	\begin{align*} 
		P_{con}^{al,0,sym} = 
		\renewcommand{\arraystretch}{1.2}
		\begin{pmatrix}
			a^{**} & 0\phantom{^{**}} & 0\phantom{^{**}} & r^{**} \\
			0\phantom{^{**}} & t^{**} & b^{**} & 0\phantom{^{**}} \\
			0\phantom{^{**}} & b^{**} & u^{**} & 0\phantom{^{**}} \\
			s^{**} & 0\phantom{^{**}} & 0\phantom{^{**}} & a^{**}
		\end{pmatrix}.
	\end{align*}
	Fortunately, the corner elements of $ P_{con}^{al,0,sym} $ can be expressed in terms of the entries of $ P_{con}^{al,0} $ of the form \eqref{Pcon_aligned_centred_elements}. Specifically, we get
	\begin{align*}
		P_{con}^{al,0,sym} = 
		\renewcommand{\arraystretch}{1.2}
		\begin{pmatrix}
			a^{**} & 0\phantom{^{**}} & 0\phantom{^{**}} & r^{**} \\
			0\phantom{^{**}} & t^{**} & b^{**} & 0\phantom{^{**}} \\
			0\phantom{^{**}} & b^{**} & u^{**} & 0\phantom{^{**}} \\
			s^{**} & 0\phantom{^{**}} & 0\phantom{^{**}} & a^{**}
		\end{pmatrix}
		=
		\renewcommand{\arraystretch}{1.2}
		\begin{pmatrix}
			4a' & 0\phantom{^{*}} & 0\phantom{^{*}} & 4r' \\
			0 & t^{**} & b^{**} & 0 \\
			0 & b^{**} & u^{**} & 0 \\
			4s' & 0\phantom{^{*}} & 0\phantom{^{*}} & 4a'
		\end{pmatrix}.
	\end{align*}
	Therefore, the set $ V^{**} $ of its eigenvectors
	\begin{align*}
		v_1^{**} = 
		\renewcommand{\arraystretch}{1.2}
		\begin{pmatrix}
			v_{11}^{**} \\
			v_{12}^{**} \\
			v_{13}^{**} \\
			v_{14}^{**}
		\end{pmatrix}, 
		\ldots, 
		v_4^{**} = 
		\renewcommand{\arraystretch}{1.2}
		\begin{pmatrix}
			v_{41}^{**} \\
			v_{42}^{**} \\
			v_{43}^{**} \\
			v_{44}^{**} 
		\end{pmatrix}, 		
	\end{align*} 
	yields 
	\begin{align*}
		V^{**} = 
		\begin{pmatrix}
			v_1^{**} & v_2^{**} & v_3^{**} & v_4^{**}
		\end{pmatrix} 
		=
		\renewcommand{\arraystretch}{1.2}
		\begin{pmatrix}
			v_{11}^{**} & 0 & 0 & v_{41}^{**} \\
			0 & 0 & v_{32}^{**} & 0 \\
			0 & v_{23}^{**} & 0 & 0 \\
			v_{14}^{**} & 0 & 0 & v_{44}^{**}
		\end{pmatrix},
	\end{align*}
	with $ \lambda_1^{**}, \ldots, \lambda_4^{**} $ as their corresponding eigenvalues.	From the geometric nature of the problem, the dataset centrally symmetric w.r.t. the coordinate origin and axially symmetric w.r.t. both $ x $- and $ y $-axis must necessarily be fitted with axes-aligned, origin-centred conic, therefore, neither of the eigenvectors $ v_2^{**} $ and $ v_3^{**} $ can be the solution, since they do not correspond to an origin-centred conic (also, neither of the eigenvalues $ \lambda_2^{**} $ and $ \lambda_3^{**} $ can represent the optimal solution). Further analysis would yield
	\begin{align*}
		\renewcommand{\arraystretch}{1.6}
		\begin{pmatrix}
			\lambda_1^{**} \\
			\lambda_4^{**}
		\end{pmatrix} 
		= 4
		\begin{pmatrix}
			\lambda_1^{al,0} \\
			\lambda_2^{al,0}
		\end{pmatrix} 
	\end{align*}
	and  
	\begin{align*}
		\setlength\arraycolsep{3pt}
		\begin{pmatrix}
			v_1^{**} & v_4^{**}
		\end{pmatrix} 
		\approx
		\begin{pmatrix}
			v_1^{al,0} & v_2^{al,0}
		\end{pmatrix}.
	\end{align*}
	Therefore, similarly to situation in proof of Proposition~\ref{solution_centred_symmetrisation}, the optimal eigenvalue of the matrix $ P_{con}^{al,0,sym} $ corresponds to the same solution as the one obtained by using the matrix $ P_{con}^{al,0} $ in Proposition~\ref{solution_aligned_centred}, which is also equivalent to the solution acquired in Proposition~\ref{solution_aligned_centred_substitution}. 
\end{proof}

\subsection*{Implementation}
In this subsection we describe three algorithms for fitting an axes-aligned origin-centred conic $ \Qalo $, each of them corresponding to one of the presented Propositions:
\begin{itemize}
	\item[$\bullet$] \textbf{QAL0} --- via eigenvalues and eigenvectors in Proposition~\ref{solution_aligned_centred} \\
	\item[$\bullet$] \textbf{QAL0-dir} --- `direct' computation using Proposition~\ref{solution_aligned_centred_substitution} without computing eigenvalues \\
	\item[$\bullet$] \textbf{QAL0-sym} --- central and axial symmetrisation of the dataset according to Proposition~\ref{solution_aligned_centred_symmetrisation} 
\end{itemize}
As in the case of previous algorithms, we use in Algorithms \textbf{QAL0} and \textbf{QAL0-dir} the reduced forms of vectors and matrices. Therefore, we can define the $ i $-th reduced GAC point
\begin{equation*} 
	\hat{P}_i^{al,0} = 
	\setlength\arraycolsep{4pt}
	\begin{pmatrix}
		1& \frac12(x_i^2+y_i^2)& \frac12(x_i^2-y_i^2)
	\end{pmatrix}^T
\end{equation*}
and create the corresponding reduced data matrix $ \hat{D}^{al,0} $ of the size $ 3 \times N $, where the $ i $-th column is $ \hat{P}_i^{al,0} $. Additionally, by removing the rows 1,4, 5,7 and 8 and columns 1,2, 4, 5 and 8 from the matrix $ B $ of the form \eqref{B}, we also define a reduced matrix for GAC inner product
\begin{align*}
	\hat{B}^{al,0} = 
	\renewcommand{\arraystretch}{1.2}
	\setlength\arraycolsep{3pt}
	\begin{pmatrix}
		\phantom{-}0&\phantom{-}0&-1\phantom{|}\\
		\phantom{-}0&-1&\phantom{-}0\phantom{|}\\
		-1&\phantom{-} 0&\phantom{-}0\phantom{|}
	\end{pmatrix}.
\end{align*} 
Reduction of the matrix $ B_c $ used in the following two algorithms is the matrix $ \Bco $ of the form \eqref{Bco}.
\\
\rule{\linewidth}{1.5pt}
\\
\textbf{\algorithmQALo}
\\
\rule[6pt]{\linewidth}{0.75pt} 
\\
\begin{minipage}{\textwidth}
	\begin{minipage}{.05\textwidth}
		\lstset{
			showlines=true
		}
		\begin{lstlisting}[style=Matlab-editor, rulecolor=\color{white}, mathescape]
$1.$

		\end{lstlisting}
	\end{minipage}
	\hfill
	\begin{minipage}{.95\textwidth}
		\begin{lstlisting}[framexrightmargin=-5.5pt, style=Matlab-editor]
B = [0  0 -1; 0 -1  0; -1  0  0];
Bc = B(2:3,1:2);
		\end{lstlisting}
	\end{minipage}
\end{minipage}
\begin{minipage}{\textwidth}
	\begin{minipage}{.05\textwidth}
		\lstset{
			showlines=true
		}
		\begin{lstlisting}[style=Matlab-editor, rulecolor=\color{white}, mathescape]
$2.$
			
			
		\end{lstlisting}
	\end{minipage}
	\hfill
	\begin{minipage}{.95\textwidth}
		\begin{lstlisting}[framexrightmargin=-5.5pt, style=Matlab-editor]
D = ones(3,N);
D(2,:) = 1/2*(p(1,:).^2 + p(2,:).^2);
D(3,:) = 1/2*(p(1,:).^2 - p(2,:).^2);
		\end{lstlisting}
	\end{minipage}
\end{minipage}
\begin{minipage}{\textwidth}
	\begin{minipage}{.05\textwidth}
		\lstset{
			showlines=true
		}
		\begin{lstlisting}[style=Matlab-editor, rulecolor=\color{white}, mathescape]
$3.$
			
			
			
		\end{lstlisting}
	\end{minipage}
	\hfill
	\begin{minipage}{.95\textwidth}
		\begin{lstlisting}[framexrightmargin=-5.5pt, style=Matlab-editor]
P = 1/N*B*(D*D')*B';
Pc = P(2:3,2:3);
P0 = P(1,1);
P1 = P(1,2:3);
		\end{lstlisting}
	\end{minipage}
\end{minipage}
\begin{minipage}{\textwidth}
	\begin{minipage}{.05\textwidth}
		\lstset{
			showlines=true
		}
		\begin{lstlisting}[style=Matlab-editor, rulecolor=\color{white}, mathescape]
$4.$
			
			
		\end{lstlisting}
	\end{minipage}
	\hfill
	\begin{minipage}{.95\textwidth}
		\begin{lstlisting}[framexrightmargin=-5.5pt, style=Matlab-editor]
Pcon = Bc*(Pc-P1'*1/P0*P1);
[EV,ED] = eig(Pcon);
EW = diag(ED);	
		\end{lstlisting}
	\end{minipage}
\end{minipage}
\begin{minipage}{\textwidth}
	\begin{minipage}{.05\textwidth}
		\lstset{
			showlines=true
		}
		\begin{lstlisting}[style=Matlab-editor, rulecolor=\color{white}, mathescape]
$5.$
			
		\end{lstlisting}
	\end{minipage}
	\hfill
	\begin{minipage}{.95\textwidth}
		\begin{lstlisting}[framexrightmargin=-5.5pt, style=Matlab-editor]
k_opt = find(EW == min(EW(EW>0)));
v_opt = EV(:,k_opt);		
		\end{lstlisting}
	\end{minipage}
\end{minipage}
\begin{minipage}{\textwidth}
	\begin{minipage}{.05\textwidth}
		\lstset{
			showlines=true
		}
		\begin{lstlisting}[style=Matlab-editor, rulecolor=\color{white}, mathescape]
$6.$
			
		\end{lstlisting}
	\end{minipage}
	\hfill
	\begin{minipage}{.95\textwidth}
		\begin{lstlisting}[framexrightmargin=-5.5pt, style=Matlab-editor]
kappa = v_opt'*Bc*v_opt;
v_opt = 1/sqrt(kappa)*v_opt; 
		\end{lstlisting}
	\end{minipage}
\end{minipage}
\begin{minipage}{\textwidth}
	\begin{minipage}{.05\textwidth}
		\lstset{
			showlines=true
		}
		\begin{lstlisting}[style=Matlab-editor, rulecolor=\color{white}, mathescape]
$7.$
			
		\end{lstlisting}
	\end{minipage}
	\hfill
	\begin{minipage}{.95\textwidth}
		\begin{lstlisting}[framexrightmargin=-5.5pt, style=Matlab-editor]
w = -1/P0*P1*v_opt;
Q = [0;w;v_opt(1);0;0;v_opt(2);0;0];
		\end{lstlisting}
	\end{minipage}
\end{minipage}
\\
\rule[12pt]{\linewidth}{1.5pt}
Computation of the matrix $ P $ in step 3. then yields a $ 3 \times 3$ reduced matrix of the form
\begin{align*} 
	\hat{P}^{al,0}=
	\renewcommand{\arraystretch}{1.6}
	\setlength\arraycolsep{2pt}
	\begin{pmatrix}
		P_0^{al} & P_1^{al,0}\\
		{P_1^{al,0}}^T & P_c^{0}\\
	\end{pmatrix}.
\end{align*}
Therefore, after defining a reduced GAC vector of origin-centred conic
\begin{align*}
	\hat{Q}^{al,0}=&
	\setlength\arraycolsep{4pt}
	\begin{pmatrix}
		\vmb & \vpb & \vp 
	\end{pmatrix}^T,
\end{align*}
the objective function \eqref{cost_aligned_centred} can be equivalently formulated as
\begin{align*}
	\hat{Q}^{al,0} \mapsto \hat{Q}^{{al,0}^T} \hat{P}^{al,0} \hat{Q}^{al,0}.
\end{align*}
Next, we can reuse the first steps of \algorithmQALo{} in \algorithmQALodir, while the other steps of algorithm will be altered.

\noindent
\rule{\linewidth}{1.5pt}
\\
\textbf{\algorithmQALodir}
\\
\rule[6pt]{\linewidth}{0.75pt} \vspace*{-21pt}
\begin{enumerate}[leftmargin=1.05cm]
	\item[1.–3.] See \algorithmQALo.
	\vspace{-0.6em}	
	\item[4.] Formation of the matrix $P_{con}^{al,0}$ according to \eqref{Pcon_aligned_centred} and extraction of its elements $ r' $ and $ s' $.
	\begin{lstlisting}[framexrightmargin=-3pt, style=Matlab-editor]
Pcon = Bc*(Pc-P1'*1/P0*P1);
r = Pcon(1,2);
s = Pcon(2,1);
	\end{lstlisting}
	\vspace{-0.6em}
	\item[5.] Direct computation of the optimal vector  $ v^{al,0} $  given by \eqref{valo_substitution}.
	\begin{lstlisting}[framexrightmargin=-3pt, style=Matlab-editor]
vpb = (r/(4*s))^0.25;  
vp = -1/(2*vpb);                      
v_opt = [vpb;vp];
	\end{lstlisting}
	\vspace{-0.6em}		
	\item[6.] Computation of $w$ by \eqref{w_aligned_centred_substitution} and forming the optimal vector $ \Qalo $ according to Proposition~\ref{solution_aligned_centred_substitution}
	\begin{lstlisting}[framexrightmargin=-3pt, style=Matlab-editor]
w = -1/P0*P1*v_opt;
Q = [0;w;vpb;0;0;vp;0;0];
	\end{lstlisting}		
\end{enumerate}
\rule[12pt]{\linewidth}{1.5pt} 
Finally, we summarise a MATLAB code for \algorithmQALosym, which is basically \algorithmQ{} preceded by prior extension of the used dataset. Reduced forms of vectors and matrices used in the following algorithm correspond to the reduced forms described in the commentary to \algorithmQ.
\\
\rule{\linewidth}{1.5pt}
\\
\textbf{\algorithmQALosym}
\\
\rule[6pt]{\linewidth}{0.75pt} \vspace*{-21pt}
\begin{enumerate}[leftmargin=1.05cm]
	\item[0.] Extension of the point dataset by its centrally symmetric images w.r.t. the origin of coordinate system and by points axially symmetric w.r.t. $ x- $ and $ y-$axis.
	\begin{lstlisting}[framexrightmargin=-3pt, style=Matlab-editor]
p = [p,-p];
p = [p, [p(1,:); -p(2,:)]];
N = 4*N;
	\end{lstlisting}
	\vspace{-0.6em}
	\item[1.–2.] See steps 1* and 2* from the commentary to \algorithmQ 
	\vspace{-0.6em}
	\item[3.–7.] See \algorithmQ
\end{enumerate}
\rule[12pt]{\linewidth}{1.5pt} 
Let us note that \algorithmQALosym{} uses four times bigger dataset than the previous two algorithms, so it is not ideal for larger amount of data, at least with regard to computational complexity. Nevertheless, we included it because of its geometrical significance.

\subsection{Overview of fitting methods}

\begin{table}[H] 
	\centering
	\setlength{\tabcolsep}{8pt}
	\caption{Overview of fitting methods}
	\label{table_overview_of_methods}
	\begin{tabular}{@{}cccc@{}}
		\toprule
		Conic                                                                                    & Algorithm         & \begin{tabular}[c]{@{}c@{}}Computational\\ method\end{tabular}                               & \begin{tabular}[c]{@{}c@{}}Objective\\ function\end{tabular} \\ \midrule
		\begin{tabular}[c]{@{}c@{}}general conic\\ $Q$\end{tabular}                        & \textbf{Q}        & \begin{tabular}[c]{@{}c@{}}eigenvalues\\ \&\\ eigenvectors\end{tabular}                      & $Q^T P Q$                                                    \\ \midrule
		\begin{tabular}[c]{@{}c@{}}axes-aligned conic\\ $Q^{al}$\end{tabular}                    & \textbf{QAL}      & \begin{tabular}[c]{@{}c@{}}eigenvalues\\ \&\\ eigenvectors\end{tabular}                      & ${Q^{al}}^T P Q^{al}$                                        \\ \midrule
		& \textbf{Q0}       & \begin{tabular}[c]{@{}c@{}}eigenvalues\\ \&\\ eigenvectors\end{tabular}                      &                                                              \\ \cmidrule(lr){2-3}
		\begin{tabular}[c]{@{}c@{}}origin-centred conic\\ $Q^0$\end{tabular}                     & \textbf{Q0-dir}   & \begin{tabular}[c]{@{}c@{}}direct use of\\ matrix entries\end{tabular}                       & $ {Q^0}^T P Q^0 $                                            \\ \cmidrule(lr){2-3}
		& \textbf{Q0-sym}   & \begin{tabular}[c]{@{}c@{}}central\\ symmetrisation\\ of point dataset\end{tabular}          &                                                              \\ \midrule
		& \textbf{QAL0}     & \begin{tabular}[c]{@{}c@{}}eigenvalues\\ \&\\ eigenvectors\end{tabular}                      &                                                              \\ \cmidrule(lr){2-3}
		\begin{tabular}[c]{@{}c@{}}axes-aligned\\ origin-centred conic\\ $Q^{al,0}$\end{tabular} & \textbf{QAL0-dir} & \begin{tabular}[c]{@{}c@{}}direct use of\\ matrix entries\end{tabular}                       & $ {Q^{al,0}}^T P Q^{al,0}$                                   \\ \cmidrule(lr){2-3}
		& \textbf{QAL0-sym} & \begin{tabular}[c]{@{}c@{}}central \& axial\\ symmetrisation\\ of point dataset\end{tabular} &                                                              \\ \bottomrule
	\end{tabular}
\end{table}

Together with the original \algorithmQ, we described eight conic fitting algorithms in total, each of them fitting one of four types of conics: a general conic~$ Q $, an axes-aligned conic~$ \Qal $, an origin-centred conic~$ \Qo $ or an axes-aligned origin-centred conic~$ \Qalo $. Overview of the fitting methods by a conic type with the corresponding algorithms, types of computational methods and the objective functions is given by Table~\ref{table_overview_of_methods}.

Let us point out that in the algorithms using the eigenvalues and eigenvectors of a certain matrix operator, the calculation of the corresponding operator has essentially the same structure for all the algorithms. As the default equation we take \eqref{Pcon}, where the matrix $ P_{con} $---the operator used in \algorithmQ{}---is computed using the central subblock $ B_c $ of the matrix $ B $ and the subblocks $ P_0 $, $ P_1 $ and $ P_c $ of matrix $ P $ given by the decomposition \eqref{Pblocks}. Consequently, the matrix operators used in Algorithms~\textbf{QAL}, \textbf{Q0} and \textbf{QALO} are, indeed, calculated with equations almost identical to equation \eqref{Pcon}, only the subblock of the matrix $ B $ and the subblocks of matrix $ P $ are generally substituted by different subblocks, according to the corresponding decomposition of matrices $ B $ and $ P $. The overview of the matrix operators computation by a conic type is given by Table~\ref{table_matrix_operator}.
\begin{table}[h]
	\centering
	\setlength{\tabcolsep}{4pt}
	\caption{Construction of matrix operator for eigenvalue problem by conic type}
	\label{table_matrix_operator}
	\begin{tabular}{@{}cccccc@{}}
		\toprule
		\multirow{2}{*}{Conic}                                                                   & \begin{tabular}[c]{@{}c@{}}subblock \\ of $ B $\end{tabular} & \multicolumn{3}{c}{\begin{tabular}[c]{@{}c@{}}subblocks \\ of $ P $\end{tabular}} & \multirow{2}{*}{Matrix operator}                                          \\ \cmidrule(lr){2-5}
		& $B_c^*$                                                 & $P_0^*$                  & $P_1^*$                    & $P_c^*$              &                                                                           \\ \midrule
		\begin{tabular}[c]{@{}c@{}}general conic \\ $Q$\end{tabular}                             & $B_c$                                                   & $P_0$                    & $P_1$                      & $P_c$                & \multirow{4}{*}{\vspace{-60pt} $P_{con}^* = B_c^* (P_c^* - {P_1^*}^T {P_0^*}^{-1} P_1^*)$} \\ \cmidrule(r){1-5}
		\begin{tabular}[c]{@{}c@{}}axes-aligned conic\\ $Q^{al}$\end{tabular}                    & $B_c$                                                   & $P_0^{al}$               & $P_1^{al}$                 & $P_c$                &                                                                           \\ \cmidrule(r){1-5}
		\begin{tabular}[c]{@{}c@{}}origin-centred conic\\ $Q^0$\end{tabular}                     & $B_c^0$                                                 & $P_0$                    & $P_1^0$                    & $P_c^0$              &                                                                           \\ \cmidrule(r){1-5}
		\begin{tabular}[c]{@{}c@{}}axes-aligned\\ origin-centred conic\\ $Q^{al,0}$\end{tabular} & $B_c^0$                                                 & $P_0^{al}$               & $P_1^{al,0}$               & $P_c^0$              &                                                                           \\ \bottomrule
	\end{tabular}
\end{table}

\section{Experimental results and applications}\label{sect5}
Finally, we apply each of the presented algorithms (including the original \algorithmQ) on a sample dataset of 10 points described by Table~\ref{table_points}. After doing so, we compare the resulting conics both visually and by values of the corresponding objective functions. 
\begin{table}[h] 
	\centering
	\setlength{\tabcolsep}{8pt}
	\caption{Fitted points}
	\label{table_points}
	\begin{tabular}{@{}ccccccccccc@{}}
		\toprule
		$x_i$ & $ -3 $ & $ -3 $ & $ -2 $ & $ -3 $ & $ -6 $ & $ -7 $ & $ -9 $ & $ -10 $ & $ -9 $ & $ -7 $ \\ \midrule
		$y_i$ & \phantom{$-$}3  & \phantom{$-$}4  & \phantom{$-$}6  & \phantom{$-$}7  & \phantom{$-$}8  & \phantom{$-$}8  & \phantom{$-$}7  & \phantom{$-$}4   & \phantom{$-$}2  & \phantom{$-$}1  \\ \bottomrule
	\end{tabular}
\end{table}
Resulting conics sorted by type and used algorithm can be seen in Figures~1–4. Even though all three algorithms for fitting an origin-centred conic $ \Qo $ produce the same result, an additional subfigure (b) was made in Figure~3 to emphasise the fact that \algorithmQosym{} uses symmetrically extended dataset, unlike the cases of Algorithms \textbf{Q0} and \textbf{Q0-dir}. Two subfigures in Figure~4 have the same purpose in relation to axes-aligned origin-centred conic $ \Qalo $.
\begin{figure}[p!]
		\begin{subfigure}[b]{\linewidth}
			\begin{subfigure}[b]{0.45\linewidth}
				\centering
				\includegraphics[width=\linewidth]{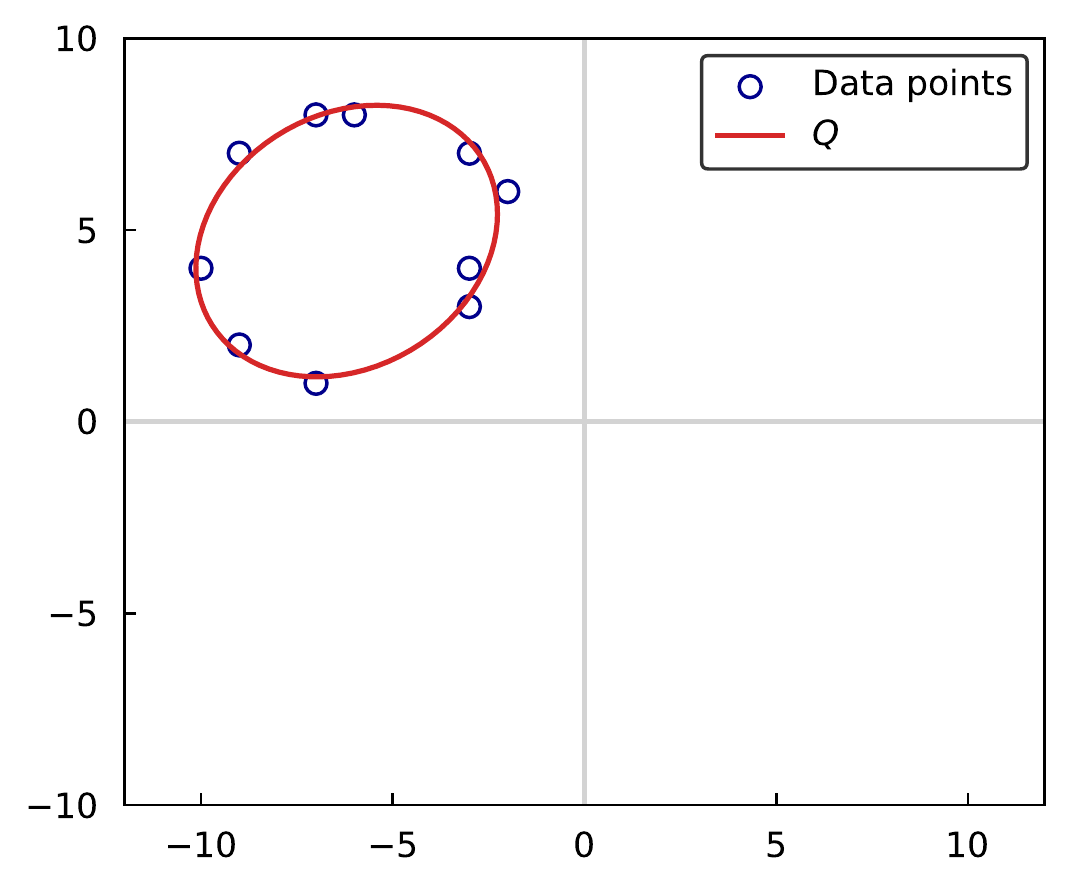}
				\caption*{\textbf{Q}}
				\caption*{\textsc{Fig. 1:} general conic $ Q $}
			\end{subfigure}
			\hfill
			\begin{subfigure}[b]{0.45\linewidth}
				\centering
				\includegraphics[width=\linewidth]{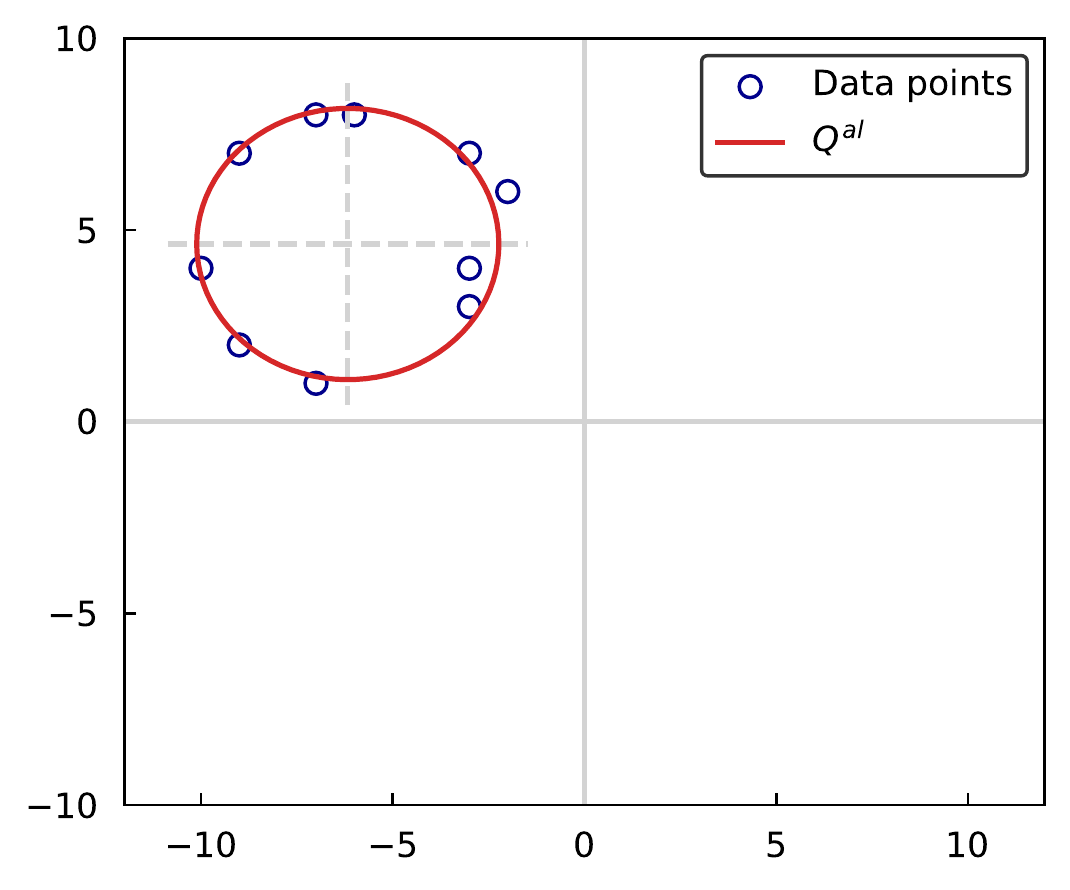}
				\caption*{\textbf{QAL}}
				\caption*{\textsc{Fig. 2:} axes-aligned conic $ \Qal $}
			\end{subfigure}
		\end{subfigure}
		\par\bigskip 	
		\begin{subfigure}[b]{\linewidth}
			\begin{subfigure}[b]{0.45\linewidth}
				\centering
				\includegraphics[width=\linewidth]{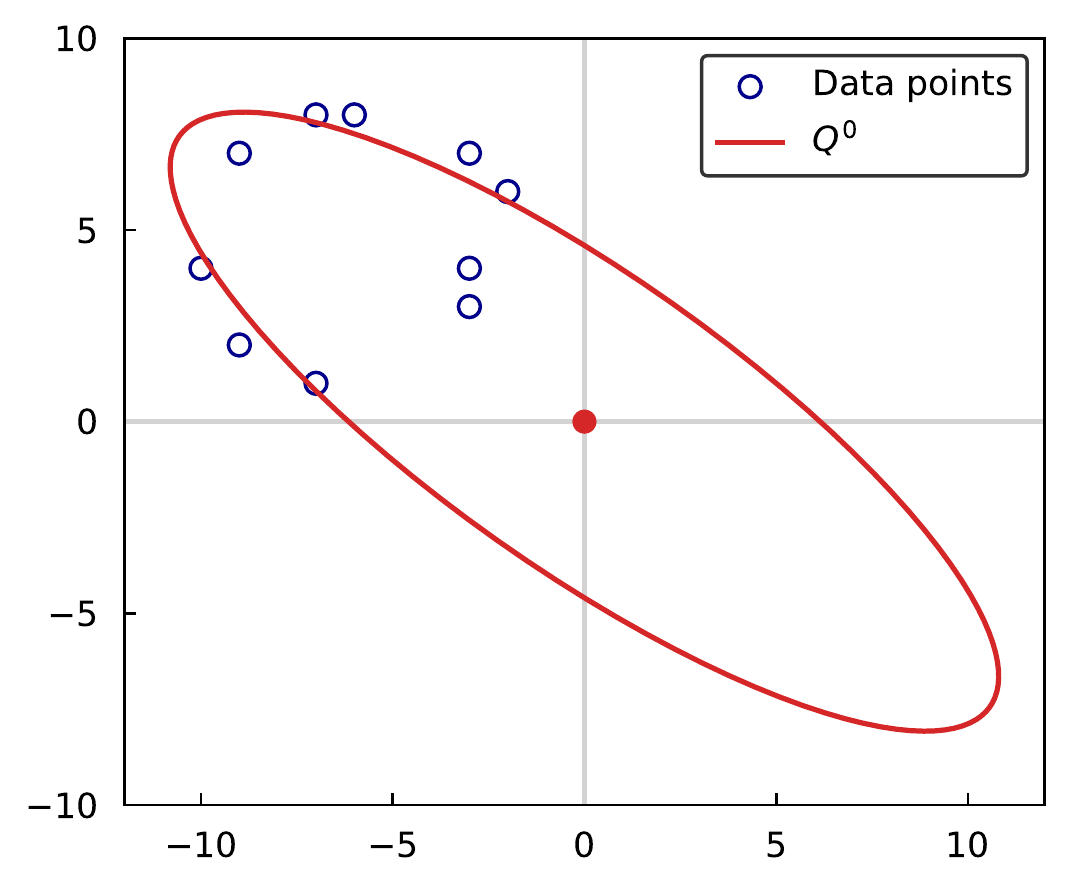}
				\caption{\textbf{Q0}, \textbf{Q0-dir}}
			\end{subfigure}
			\hfill
			\begin{subfigure}[b]{0.45\linewidth}
				\centering
				\includegraphics[width=\linewidth]{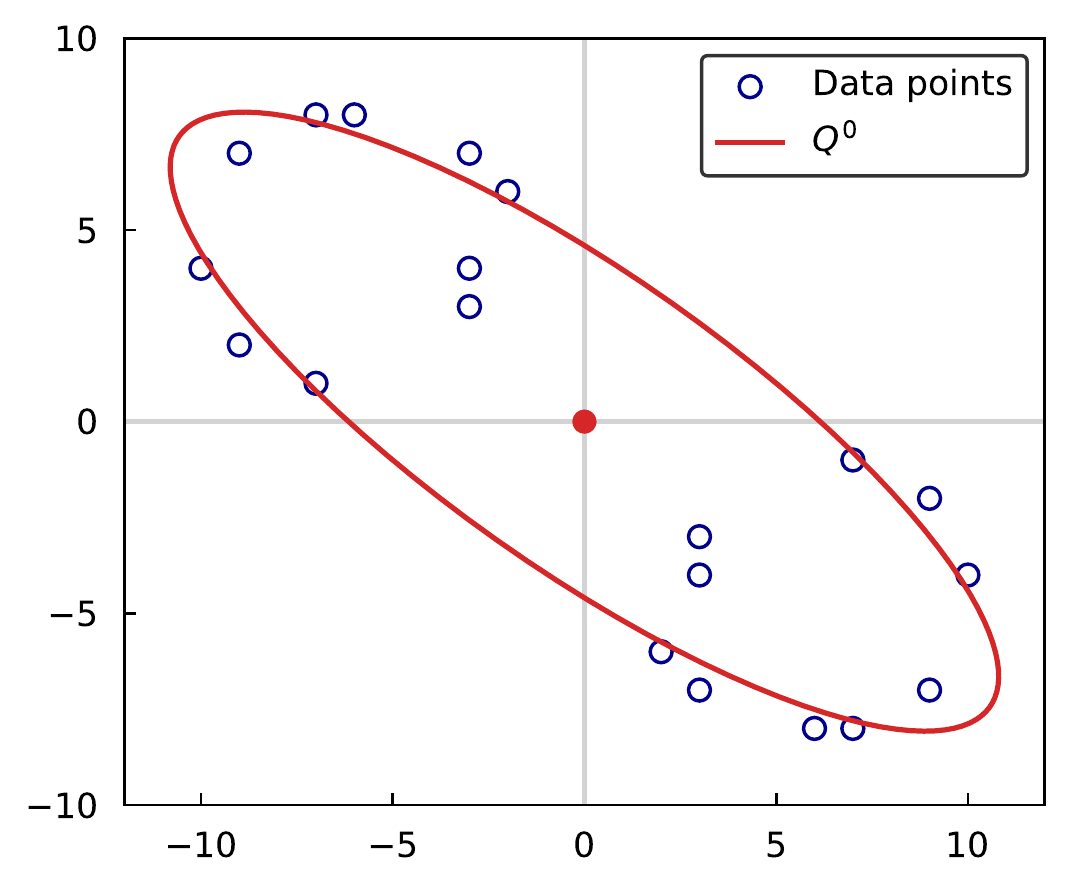}
				\caption{\textbf{Q0-sym}}
			\end{subfigure}
			\vspace{4pt}
			\caption*{\textsc{Fig. 3:} origin-centred conic $ \Qo $}
		\end{subfigure}	
		\par\bigskip 
		\begin{subfigure}[b]{\linewidth}
			\begin{subfigure}[b]{0.45\linewidth}
				\centering
				\includegraphics[width=\linewidth]{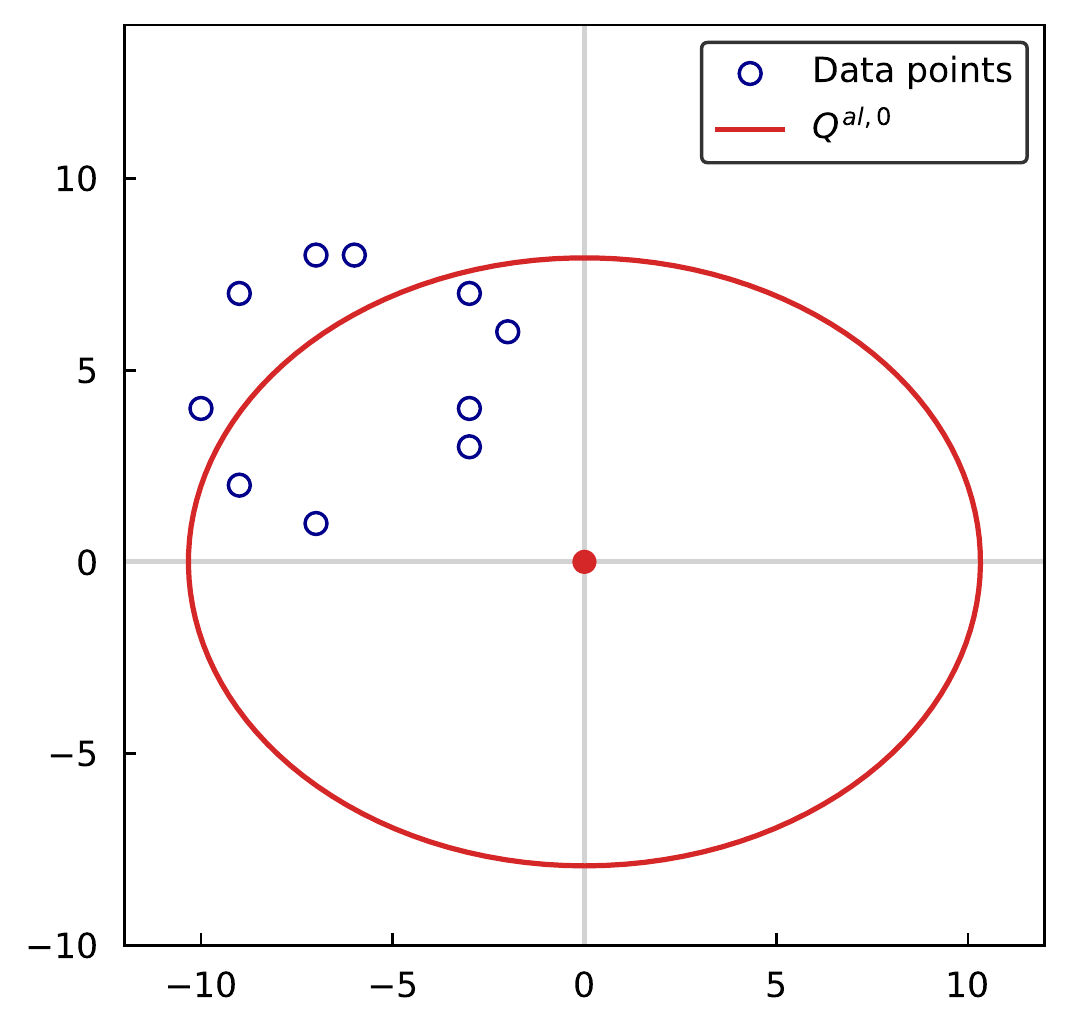}
				\caption*{(a) \textbf{QAL0}, \textbf{QAL0-dir}}
			\end{subfigure}
			\hfill
			\begin{subfigure}[b]{0.45\linewidth}
				\centering
				\includegraphics[width=\linewidth]{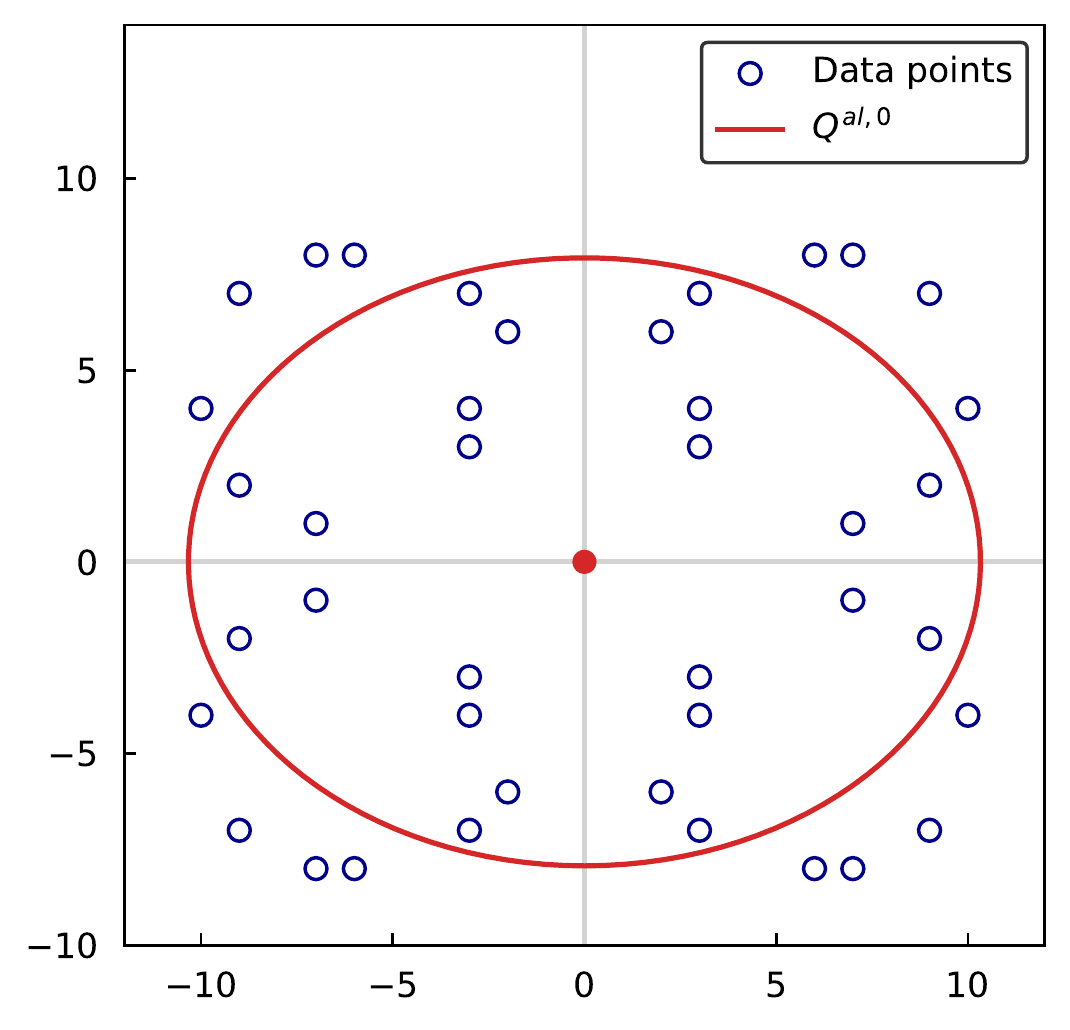}
				\caption*{(b) \textbf{QAL0-sym}}
			\end{subfigure}
			\vspace{6pt}
			\caption*{\textsc{Fig. 4:} axes-aligned origin-centred conic $ \Qalo $}
		\end{subfigure}
\end{figure}
It is obvious that in our case the new conic fitting methods with additional geometric constraints produce conics with greater total distance from the data points than general conic $ Q $ acquired by \algorithmQ. Even though the difference is not that evident when comparing conics $ Q $ and $ \Qal $, one cannot overlook the increase of total distance from the data points to conic $ \Qo $, let alone the case of conic $ \Qalo $. Consequently, we can conclude that the ability to achieve a tight fit is decreasing with the fitted conic's degree of freedom reduction (at least for our dataset), in other words, value of objective function is inversely proportional to the degrees of freedom (d.f.). Indeed, we can see this trend in Figures~1–4: a general conic $ Q $ has five d.f. in total; an axes-aligned conic $ \Qal $ has one less d.f. due to the fixed tilt of axes; an origin-centred conic $ \Qo $ loses two d.f. because of the fixed position of its centre-point; finally, a conic $ \Qalo $ has three d.f. less than a general conic $ Q $, since it has both the tilt of axes and the centre-point position fixed. The fits depicted in Figures~1–4 are characterised by the corresponding conic type, value of the objective function and degrees of freedom in Table~\ref{table_obj_function_freedom}. 
\begin{table}[h]
	\centering
	\setlength{\tabcolsep}{8pt}
	\caption{Objective function \& geometric freedom}
	\label{table_obj_function_freedom}
	\begin{tabular}{@{}ccc@{}}
		\toprule
		Conic        & value of objective function & degrees of freedom \\ \midrule
		$ Q $        & 0.0245                      & 5                  \\
		$ \phantom{^{al}}Q^{al} $   & 0.0871                      & 4                  \\
		$ \phantom{^{0}}Q^{0} $    & 1.0369                      & 3                  \\
		$ \phantom{^{al,0}}Q^{al,0} $ & 4.3361                      & 2                  \\ \bottomrule
	\end{tabular}
\end{table} 
Due to the layout of the points in the used dataset, conics $ \Qo $ and $ \Qalo $ are not exactly tightly fitted, nonetheless, such fits can be conveniently used for data consisting of points that approximately lie on an origin-centred, or axes-aligned origin-centred conic, respectively. One of the fields, where the mentioned fits can be used, are so called \textit{switched systems}, with conics representing the integral curves (trajectories) of underlying system of ODE's. At the beginning, the system of ODE's is numerically solved (e.g. by Runge-Kutta method) with the initial condition at the starting point $ A $. This will give us a set of points representing the initial conic. Subsequently, the GAC conic fitting algorithm is applied and thus we get the conic in IPNS representation. In some cases, we know beforehand that the resulting integral curves of ODE's should be origin-centred conics, therefore, it is more appropriate to fit the points with one of Algorithms \textbf{Q0}, \textbf{Q0-dir} and \textbf{Q0-sym} instead of \algorithmQ{}, which cannot ensure additional geometric constraints. Moreover, other ODE's lead to trajectories simultaneously axes-aligned and origin-centred, so any of Algorithms \textbf{QAL0}, \textbf{QAL0-dir} and \textbf{QAL0-sym} can be applied in that case. Such an approach makes the initial trajectories and the following GAC calculations very precise. For more details on the topic, see \cite{DV}.

\section{Conclusion}
At the beginning of the text, we briefly summarised the concept of GAC and recalled the original conic fitting algorithm using GAC and specific normalisation constraint. Consequently, by using basic knowledge of conic sections theory and exploiting the matrix formulation of the original fitting problem, \cite{GACFIT}, we derived methods of fitting conics with one of three additional geometric properties, in particular: having the axes aligned with the coordinate axes, having the centre located at the origin of coordinates, and, eventually, two previous properties combined. 

Each of the presented fitting methods is followed by the corresponding MATLAB algorithm (or multiple algorithms with equivalent output), so, together with the original algorithm, we presented eight conic fitting algorithms in total, each resulting in one of four geometrically (un)constrained types of conics. Moreover, redundant elements used in the derivations of the fitting methods were mostly omitted in the described algorithms (including the original algorithm), thus the computational demands on both time and memory have been reduced.

All the methods were tested on a sample dataset and the resulting conics were analysed visually and by the values of the corresponding objective functions. In particular, our empirical results showed a relationship between a fitted conic's degrees of freedom and tightness of the fit to the data points: the more degrees of freedom, the tighter the fit. We point out that all GAC fitting algorithms are not invariant under translation, which is a natural property due to the structure of GAC. Translation invariant algorithm is a subject of further research.

Finally, we mentioned possible use of the algorithms in applications.

\section*{Acknowledgement}
The research was supported by a grant no. FSI-S-20-6187.

\newpage
\noindent
PAVEL LOU\v CKA, PETR VA\v S\'IK 
\newline
Institute of Mathematics
\newline 
Brno University of Technology
\newline
Faculty of Mechanical Engineering, 
\newline
Technick\'a 2
\newline
616 69 Brno, Czech Republic
\newline
\noindent
e-mail:{\tt \ Pavel.Loucka@vutbr.cz, Petr.Vasik@vutbr.cz }

\end{document}